\documentclass[11pt,reqno]{amsart}

\usepackage[T1]{fontenc}
\usepackage[utf8]{inputenc}
\usepackage{lmodern}
\usepackage{microtype}
\usepackage{booktabs}
\usepackage[margin=1.12in]{geometry}
\usepackage{setspace}
\usepackage{enumitem}
\usepackage{xcolor}
\usepackage[normalem]{ulem}
\usepackage{amsmath,amssymb,amsfonts,amsthm,mathtools}
\usepackage{mathrsfs}
\usepackage{bbm}
\usepackage{bm}
\numberwithin{equation}{section}
\allowdisplaybreaks

\usepackage{tikz}
\usetikzlibrary{arrows.meta,positioning,calc}

\usepackage[numbers,sort&compress]{natbib}
\usepackage[
  colorlinks=true,
  linkcolor=blue,
  citecolor=blue,
  urlcolor=blue
]{hyperref}
\usepackage[nameinlink,capitalize,noabbrev]{cleveref}

\newif\ifdraftversion
\draftversionfalse
\ifdraftversion
  \usepackage[pagewise]{lineno}
  \linenumbers
\fi

\theoremstyle{plain}
\newtheorem{theorem}{Theorem}[section]

\newtheorem{proposition}[theorem]{Proposition}
\newtheorem{corollary}[theorem]{Corollary}
\newtheorem{lemma}[theorem]{Lemma}

\theoremstyle{definition}

\theoremstyle{remark}
\newtheorem{remark}[theorem]{Remark}

\crefname{theorem}{Theorem}{Theorems}
\crefname{mainthm}{Main Theorem}{Main Theorems}
\crefname{proposition}{Proposition}{Propositions}
\crefname{corollary}{Corollary}{Corollaries}
\crefname{lemma}{Lemma}{Lemmas}
\crefname{definition}{Definition}{Definitions}
\crefname{assumption}{Assumption}{Assumptions}
\crefname{remark}{Remark}{Remarks}
\crefname{notation}{Notation}{Notations}
\crefname{equation}{equation}{equations}
\crefname{section}{Section}{Sections}
\crefname{appendix}{Appendix}{Appendices}

\newcommand{\spn}{\operatorname{sp}}
\newcommand{\dd}{\,\mathrm d}
\newcommand{\one}{\mathbf{1}}
\newcommand{\E}{\mathbb E}
\newcommand{\Prob}{\mathbb P}

\newcommand{\Mp}{M_{\mathrm p}}

\makeatletter
\newcommand{\proofstep}[1]{%
  \par
  \addvspace{\medskipamount}%
  \textit{#1\@addpunct{.}}\enspace\ignorespaces
}
\makeatother

\title[Critical Erd\H{o}s--R\'enyi Component Process]%
{Uniform High Order Factorial Moment Bounds for the
Critical Erd\H{o}s--R\'enyi Component Process}

\author{Wen Sun}
\address{School of Mathematical Sciences, University of Science and
Technology of China, Hefei 230026, China}
\email{wensun.ustc@gmail.com}
\date{}

\subjclass[2020]{05C80; 60C05; 60G55}
\keywords{critical random graph; component process; surplus; factorial moment
measures; point process convergence}

\begin{document}
\begin{abstract}
We give a finite \(n\) enumerative derivation of the local surplus marked
point process form of Aldous's critical window limit for the
Erd\H{o}s--R\'enyi random graph~\cite{Aldous1997}.  For
\(p_n=n^{-1}+\lambda n^{-4/3}\), let \(\Xi_n\) place an atom at the rescaled
size and surplus of each component.  Exact component enumeration yields the
limiting factorial correlation densities and the uniform bound
\[
  \mathbb E[(\Xi_n(K))_q]\le C_K^q e^{-c_Kq^3},
  \qquad q\ge1,
\]
for every compact marked window \(K\), uniformly in admissible \(n\).  The
cubic order is optimal for the limiting factorial measures, and the estimate
persists after summing over all surpluses on compact size intervals.  It yields
overcrowding and local exponential moment bounds, quantitative truncation of
the finite \(n\) Laplace functional expansion, and local point process
convergence.  Using the Janson--Spencer Palm
description~\cite{JansonSpencer2007} and a classical all excess estimate, we
also recover ordered \(\ell^2\) component size convergence.
\end{abstract}

\maketitle

\section{Introduction}\label{sec:introduction}

The phase transition in the Erd\H{o}s--R\'enyi random graph \(G(n,p)\)
originates in the work of Erd\H{o}s and R\'enyi~\cite{ER1960}.  Inside the
critical window, the component size sequence and its surplus marks exhibit a
rich structure that has driven much of the enumerative and probabilistic
analysis of random graphs.  Early developments include
Bollob\'as~\cite{Bollobas1984},
\L uczak~\cite{Luczak1990}, Janson, Knuth,
\L uczak and Pittel~\cite{JKLP1993}, and \L uczak, Pittel and
Wierman~\cite{LPW1994}; see also general accounts in
\cite{Bollobas2001,JLR2000,vdH2017}.  The limiting component process is a
central object for the multiplicative coalescent and continuum critical random
graphs; see, for example, \cite{ABG2010,ABG2012}.

Fix \(\lambda\in\mathbb R\) and put
\begin{equation}\label{eq:pn-intro}
  p_n=n^{-1}+\lambda n^{-4/3}.
\end{equation}
Choose \(n_\lambda\) so that \(p_n\in(0,1)\) for every \(n\ge n_\lambda\).
Let \(\mathcal C(G)\) be the set of connected components of a graph \(G\).
For a component \(C\), write \(|C|\) for its number of vertices, \(e(C)\) for
its number of edges, and
\[
  \spn(C)=e(C)-|C|+1
\]
for its surplus.  This paper studies the local marked point process
\begin{equation}\label{eq:Xi-intro}
  \Xi_n
  =
  \sum_{C\in\mathcal C(G(n,p_n))}
  \delta_{(n^{-2/3}|C|,\spn(C))}
\end{equation}
on \(E=(0,\infty)\times\mathbb Z_{\ge0}\).  Thus each component contributes
an atom carrying its \(n^{-2/3}\)-rescaled size and surplus.  The aim is to
give a finite \(n\) enumerative derivation of the local surplus marked
point process limit and, more importantly, to obtain estimates uniform in the
factorial order.  We also show how all surplus enumeration and a classical
all excess estimate upgrade the local result to ordered \(\ell^2\)
component size convergence at each fixed \(\lambda\).

\paragraph{Classical convergence and fixed order factorial moments.}
Two classical routes underlie the convergence theory for critical random
graph components.  Aldous~\cite[Theorem~3 and Corollary~2]{Aldous1997}
used the exploration process:
after parabolic centering and diffusive scaling, the breadth first walk
converges to Brownian motion with parabolic drift, whose reflected excursions
give the limiting component sizes and whose excursion areas govern the
surplus marks.  A second route, due to \L uczak, Pittel and
Wierman~\cite{LPW1994,JLR2000}, is enumerative and studies critical
components through labelled graph enumeration and their internal structure.
This line gives limits for fixed numbers of the largest components and their
complexities, and hence local size point process information.  Aldous's
exploration process theorem gives the stronger convergence of the whole
ranked size sequence in \(\ell^2_\downarrow\).

Later formulations put these limits closer to the point process language used
here.  Janson and Spencer~\cite{JansonSpencer2007} described the limiting
point process through its intensity, independent complexity marks, Palm shift
and factorial moments.  Van der Hofstad, Kager and M\"uller~\cite{HKM2009}
proved a local limit theorem for the ranked largest components and stated its
surplus marked extension.  See also \cite{BJR2007,ABG2010,ABG2012} for
broader phase transition theory and continuum critical random graphs.

The present paper uses enumeration at the level of simultaneous component
tuples.  Instead of decomposing the internal structure of a critical
component, we count ordered collections of vertex disjoint components with
prescribed sizes and surpluses and interpret the exact finite \(n\) formula
as a factorial correlation calculation.  Related finite \(n\) fixed window
moment estimates appear in
Janson--Spencer~\cite[Proposition~1.4 and Lemma~5.1]{JansonSpencer2007}:
Proposition~1.4 treats the total rescaled mass and count above a fixed
cutoff, while Lemma~5.1 gives the corresponding fixed order moment bounds.
The new point is to keep the dependence on the factorial order \(q\) explicit
and to prove a single finite \(n\) estimate uniform in \(q\).  For the point
process \(\Xi_n\) on \(E=(0,\infty)\times\mathbb Z_{\ge0}\), this is the
estimate needed to control the complete
inclusion and exclusion expansion for the Laplace functional
\[
  \E e^{-\langle f,\Xi_n\rangle}
  =
  \sum_{q=0}^\infty\frac{(-1)^q}{q!}
  \int_{E^q}\prod_{i=1}^q\bigl(1-e^{-f(z_i)}\bigr)
  \,\dd\alpha_{\Xi_n}^{(q)}(z_1,\ldots,z_q),
\]
where \(\alpha_{\Xi_n}^{(q)}\) denotes the \(q\)th factorial moment measure
of \(\Xi_n\), to justify its termwise convergence
at finite \(n\), and to extract quantitative consequences such as
overcrowding bounds, generating functional truncation, and versions uniform
over compact intervals of \(\lambda\).  Combined with classical all excess
estimates, this local calculation also recovers the ordered \(\ell^2\) limit
for each fixed \(\lambda\).

\paragraph{Main result and consequences.}
Let \(K\Subset E\) be a compact marked window.  Thus its size coordinate lies
in a compact subinterval of \((0,\infty)\), and only finitely many surplus
values occur.  The main finite \(n\) estimate is the
factorial order uniform bound
\begin{equation}\label{eq:main-intro}
  \mathbb E[(\Xi_n(K))_q]\le C_K^q e^{-c_Kq^3}
\end{equation}
for suitable constants \(C_K,c_K>0\), depending only on \(K\) and
\(\lambda\), all \(n\ge n_\lambda\), and all \(q\ge1\); see
Theorem~\ref{thm:uniform-bound}.  The cubic power of \(q\) is optimal at the
level of the limiting factorial measures
(Proposition~\ref{prop:cubic-optimal}).  This estimate is the quantitative
ingredient behind the convergence proof and the consequences below.

\begin{enumerate}[label=(\arabic*),leftmargin=2.3em]
\item The complete inclusion and exclusion expansion for the Laplace functional
      is absolutely controlled at finite \(n\).  This gives tightness,
      uniqueness of subsequential limits, and local marked point process
      convergence before the Brownian limit is invoked
      (Theorem~\ref{thm:main}(ii));
\item Local overcrowding has a cubic exponential upper tail: for every
      compact marked window \(K\Subset E\),
      \[
        \Prob\{\Xi_n(K)\ge m\}
        \le \frac{C_K^m}{m!}\,e^{-c_Km^3},
        \qquad m\ge1,
      \]
      uniformly in \(n\).  The same estimate also gives uniform local
      exponential moments
      (Corollary~\ref{cor:finite-n-consequences});
\item Finite \(n\) generating functional expansions have exponentially
      accurate truncations: the tail beyond order \(Q\) is at most
      \(Ae^{-dQ^3}\), uniformly in \(n\) and in bounded test functions on a
      fixed compact support (Corollary~\ref{cor:finite-n-consequences});
\item After summing over all surplus values, the same cubic exponential
      high order bound holds on every compact size interval.  Together with
      the small component square mass estimate
      \(\sup_{n\ge n_\lambda}\E
      \sum_{i:X_{n,i}\le\varepsilon}X_{n,i}^2
      =O(\sqrt\varepsilon)\)
      and a classical all excess macroscopic mass bound, this recovers the
      ordered \(\ell^2\) component size limit for each fixed \(\lambda\)
      (Corollary~\ref{cor:l2-convergence} and
      Propositions~\ref{prop:unmarked-local}--\ref{prop:macro-mass}).
\end{enumerate}

The limit is identified with Aldous's parabolic drift Brownian excursion
process with Poisson area marks through the classical Janson--Spencer Palm
description \cite{JansonSpencer2007}; see
Corollary~\ref{cor:aldous-identification}.

\paragraph{Method.}
The proof starts from a finite graph calculation.  For each factorial order
\(q\), we enumerate ordered, vertex disjoint component tuples with prescribed
sizes and surplus labels, and we keep the dependence on \(q\) explicit.  The
argument has four main ingredients:
\begin{enumerate}[label=(\roman*),leftmargin=2.3em]
\item the exact finite \(n\) tuple formula, which gives the factorial moment
      formula for components with prescribed sizes and surpluses
      (Proposition~\ref{prop:exact});
\item Wright's fixed surplus enumeration
      \cite{Wright1977,Wright1980}, combined with the critical window
      expansion, which gives the limiting \(q\)-point correlation density
      (Proposition~\ref{prop:local});
\item the uniform high order bound \eqref{eq:main-intro}
      (Theorem~\ref{thm:uniform-bound}), which controls the
      inclusion and exclusion expansion for the Laplace functional and yields
      local marked point process convergence (Subsection~\ref{sec:laplace});
\item an all surplus connected graph bound and a classical all excess
      macroscopic mass estimate, which remove the surplus and size cutoffs
      and yield ordered \(\ell^2\) convergence
      (Section~\ref{sec:ordered}).
\end{enumerate}
At the identification stage, the classical Janson--Spencer Palm description
\cite{JansonSpencer2007} matches the limiting factorial measures with the
surplus marked Brownian excursion process of Aldous~\cite{Aldous1997}; see
Corollary~\ref{cor:aldous-identification}.

The main cancellation is already visible in the exact tuple formula.  Suppose
the selected components have total size \(K=k_1+\cdots+k_q\).  In the
logarithmic asymptotics, the linear and quadratic contributions coming from
the connected graph counts, the falling factorial, the present edge factor
and the absent edge factor cancel, leaving
\[
  -\frac{K^3}{6n^2}
  +\frac{\lambda K^2}{2n^{4/3}}
  -\frac{\lambda^2K}{2n^{2/3}}.
\]
After the scaling \(K n^{-2/3}\to x_1+\cdots+x_q\), this term becomes
\(-F(x_1+\cdots+x_q,\lambda)\).  This explains the factor that is not a product
\(\exp\{-F(x_1+\cdots+x_q,\lambda)\}\) in the limiting factorial densities.
It is the simultaneous counterpart of the successive parameter shifts used in
the Janson--Spencer and van der Hofstad--Kager--M\"uller descriptions
\cite{JansonSpencer2007,HKM2009}.  The reusable part of the argument is the
conversion of this simultaneous \(q\)-component formula into an
summable Laplace functional expansion that is uniform in the order.

The local topology is the usual vague topology on locally finite point
measures \cite[Chapter~4]{Kallenberg2017}.  Accordingly, compact subsets of
\(E\) stay away from zero in the size coordinate, are bounded above, and
contain only finitely many surplus values.  Section~\ref{sec:ordered} adds
the small size, large size and surplus tail estimates needed to pass from
local marked convergence to the ordered \(\ell^2\) component size limit for a
fixed value of \(\lambda\).

The connected graph ingredient is Wright's fixed excess enumeration
\cite{Wright1977,Wright1980}; its relation to Brownian excursion area was
made explicit by Spencer~\cite{Spencer1997} and surveyed by
Janson~\cite{Janson2007}.  The ordered \(\ell^2\) extension additionally uses
the all surplus bound of Janson--Spencer~\cite[(3.7)]{JansonSpencer2007} and
their all excess estimate for the total mass of complex components
\cite[(5.3)]{JansonSpencer2007}.  Related component size tail estimates,
including cubic critical window exponents for unusually large components,
appear in
\cite{NachmiasPeres2010,Roberts2018,DeAmbroggioRoberts2022}; see also
\cite{DeAmbroggio2022,DeAmbroggio2024} for elementary approaches to critical
component sizes.  Those results concern largest component or single component
bounds and related component size estimates, whereas the present estimates
control local factorial correlations uniformly in all orders.

\paragraph{Organization.}
Section~\ref{sec:setup} recalls the notation, introduces the candidate
factorial moment densities and states the main theorem.
Section~\ref{sec:finite-n} proves the marked local limit from the finite
\(n\) tuple formula, the local asymptotic, the uniform high order estimate,
and the Laplace functional argument; it also identifies the limit with
Aldous's process.  Section~\ref{sec:ordered} removes the local truncations
and proves ordered \(\ell^2\) convergence.

\section{Setup and main result}\label{sec:setup}

We keep the notation introduced in Section~\ref{sec:introduction}: \(p_n\) is
given by \eqref{eq:pn-intro}, \(n\ge n_\lambda\), and \(\Xi_n\) is the marked
point measure \eqref{eq:Xi-intro} on
\(E=(0,\infty)\times\mathbb Z_{\ge0}\).  Compact test windows will always
select components whose rescaled sizes stay in a bounded interval away from
zero and whose surplus marks lie in a finite set.
When the dependence on the parameter is relevant, we write
\(p_{n,\lambda}\) and \(\Xi_{n,\lambda}\); otherwise \(\lambda\) is fixed and
suppressed as above.

\subsection{Factorial moment measures}

Let \(\mathcal M(E)\) be the space of locally finite Radon measures on \(E\),
equipped with the vague topology.  This is a Polish space by the general
theory of random measures; see Kallenberg~\cite[Chapter~4]{Kallenberg2017}.
Let \(\Mp(E)\subset\mathcal M(E)\) denote the locally finite integer valued
Radon measures.  This is the standard state space for locally finite point
processes; see also Daley--Vere-Jones~\cite[Section~11.1]{DVJ2008}.  The
subspace \(\Mp(E)\) is closed, hence Polish.  Indeed, if
\(\mu_j\in\Mp(E)\) and \(\mu_j\to\mu\) vaguely, then
\(\mu_j(B)\to\mu(B)\) for every relatively compact Borel set \(B\) with
\(\mu(\partial B)=0\).  Hence \(\mu(B)\in\mathbb Z_{\ge0}\) for such \(B\).
Choose a countable generating ring \(\mathcal R\) of relatively compact
\(\mu\)-continuity sets.  For any relatively compact Borel set \(A\), regularity
gives \(R_m\in\mathcal R\) with \(\mu(A\triangle R_m)\to0\).  Since
\(\mu(R_m)\in\mathbb Z_{\ge0}\) and \(\mu(R_m)\to\mu(A)<\infty\), we get
\(\mu(A)\in\mathbb Z_{\ge0}\).  Thus \(\mu\in\Mp(E)\).

For a locally finite point measure \(\xi\), sums
\(\sum_{z_1,\ldots,z_q\in\xi}^{\neq}\) are over ordered tuples of distinct
\emph{atom indices}.  Thus atoms at the
same spatial location are still distinguished if they have different
multiplicity indices.  The \(q\)th factorial moment measure of a point
process \(\Xi\) is defined by
\begin{equation}\label{eq:fmm-definition}
  \int_{E^q}H\,\dd\alpha_\Xi^{(q)}
  =
  \E\sum_{z_1,\ldots,z_q\in\Xi}^{\neq}
  H(z_1,\ldots,z_q)
\end{equation}
for nonnegative measurable \(H\).  We use
\((m)_q=m(m-1)\cdots(m-q+1)\).

\subsection{Wright constants and the candidate densities}

Let \(c(k,m)\) be the number of connected labelled simple graphs on \(k\)
vertices with \(m\) edges.  For every fixed \(r\ge0\), Wright proved
\begin{equation}\label{eq:wright}
  c(k,k+r-1)
  \sim
  w_r k^{k+3r/2-2},
  \qquad k\to\infty,
\end{equation}
where \(w_r>0\); see Wright~\cite{Wright1977}, and, in exactly this
normalization, Janson--Spencer~\cite[(3.1)]{JansonSpencer2007}, with related
later asymptotic results in~\cite{Wright1980}.  In particular, \(w_0=1\), consistently with
Cayley's formula \(c(k,k-1)=k^{k-2}\).
We use the notation \(\Psi(t)=\sum_{r=0}^\infty w_rt^r\).

Let
\begin{equation}\label{eq:F-def}
  F(x,\lambda)
  =
  \frac{x^3}{6}-\frac{\lambda x^2}{2}+\frac{\lambda^2x}{2}
  =
  \frac12\int_0^x(u-\lambda)^2\,\dd u
\end{equation}
and put
\begin{equation}\label{eq:rho-def}
  \rho_r(x)=\frac{w_r}{\sqrt{2\pi}}x^{3r/2-5/2},
  \qquad x>0,\quad r\ge0.
\end{equation}
For \(q\ge1\), define
\begin{equation}\label{eq:mq-def}
  m_\lambda^{(q)}\bigl((x_1,r_1),\ldots,(x_q,r_q)\bigr)=\exp\!\left\{-F\!\left(\sum_{i=1}^q x_i,\lambda\right)\right\}\prod_{i=1}^q\rho_{r_i}(x_i).
\end{equation}
Let \(\alpha_\lambda^{(q)}\) be the measure on \(E^q\) with this density
with respect to Lebesgue measure in each size coordinate and counting
measure in each surplus coordinate.  These measures are locally finite,
since every compact subset of \(E^q\) stays away from zero in all size
coordinates and contains only finitely many surplus vectors.

\subsection{The Aldous marked excursion process}

We recall the limiting object in Aldous's critical window theorem
\cite[Corollary~2]{Aldous1997}.  Let \(B\) be standard
Brownian motion and set
\begin{equation}\label{eq:parabolic-drift}
  W_\lambda(t)=B(t)+\lambda t-\frac{t^2}{2},
  \qquad
  \overline W_\lambda(t)
  =W_\lambda(t)-\inf_{0\le s\le t}W_\lambda(s).
\end{equation}
Write \((\gamma_j)\) for the excursions of \(\overline W_\lambda\), let
\(\zeta_j\) be the lifetime of \(\gamma_j\), and let
\[
  A_j=\int_0^{\zeta_j}\gamma_j(s)\,\dd s.
\]
Conditional on the excursion paths, let \((N_j)\) be independent with
\(N_j\sim\operatorname{Poisson}(A_j)\), and define
\begin{equation}\label{eq:Aldous-process}
  \Xi_\lambda^{\mathrm A}
  =\sum_j\delta_{(\zeta_j,N_j)}.
\end{equation}

Let
\[
  \ell^2_\downarrow
  =\left\{\mathbf x=(x_1,x_2,\ldots):
    x_1\ge x_2\ge\cdots\ge0,\quad \sum_{i\ge1}x_i^2<\infty\right\}
\]
with the \(\ell^2\) metric.  Arrange the components of \(G(n,p_n)\) in
nonincreasing order of size, breaking finite \(n\) ties arbitrarily, append
zeros, and set
\begin{equation}\label{eq:Xn-def}
  \mathbf X_n
  =\bigl(n^{-2/3}|C_{n,1}|,n^{-2/3}|C_{n,2}|,\ldots\bigr).
\end{equation}
We write \(X_{n,i}=n^{-2/3}|C_{n,i}|\) for coordinate \(i\) of
\(\mathbf X_n\).
Similarly, let \(\boldsymbol\zeta_\lambda\) be the nonincreasing
rearrangement of the excursion lifetimes \((\zeta_j)\), padded by zeros when
viewed as a sequence.  Aldous's theorem gives
\(\boldsymbol\zeta_\lambda\in\ell^2_\downarrow\) almost surely and identifies
it as the classical \(\ell^2_\downarrow\) limit of \(\mathbf X_n\); the
Poisson variables above are the corresponding limiting surplus marks.  Thus
\(\Xi_\lambda^{\mathrm A}\) is the associated local marked point process.

\subsection{Main results}

We now state the main assertions.  The first theorem is the finite \(n\),
all order estimate.  The second gives the resulting local marked
point process convergence by Laplace functionals.  The two corollaries
identify the limit with Aldous's excursion process and recover the ordered
\(\ell^2\) component size limit.

\begin{theorem}[Uniform high order factorial bound]
\label{thm:uniform-bound}
Fix \(\lambda\in\mathbb R\), let \(p_n\) be given by
\eqref{eq:pn-intro}, and let \(\Xi_n\) be given by \eqref{eq:Xi-intro}.
For every compact \(K\Subset E\), there are constants
\(C_K,c_K>0\), depending only on \(K\) and \(\lambda\), such that
\begin{equation}\label{eq:main-uniform-bound}
  \mathbb E[(\Xi_n(K))_q]
  \le C_K^q e^{-c_Kq^3},
  \qquad q\ge1,
\end{equation}
uniformly in every \(n\ge n_\lambda\).
Moreover, if \(|\lambda|\le L<\infty\), then \(n_L\) can be chosen so that
for every compact \(K\Subset E\) there are \(C_K,c_K>0\), depending only on
\(K\) and \(L\), for which \eqref{eq:main-uniform-bound} holds uniformly in
all \(n\ge n_L\) and all \(|\lambda|\le L\), with \(p_n\) and \(\Xi_n\)
carrying the parameter \(\lambda\).
\end{theorem}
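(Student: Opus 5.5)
The bound will follow from the exact finite \(n\) tuple formula. Write \(K\subset[a,b]\times\{0,\ldots,R\}\) with \(0<a<b<\infty\). Then \(\E[(\Xi_n(K))_q]\) is bounded by the sum, over ordered \(q\)-tuples \((k_i,r_i)\) with \(k_i\in[an^{2/3},bn^{2/3}]\) and \(r_i\le R\), of the expected number of ordered \(q\)-tuples of vertex disjoint components with these sizes and surpluses. This expectation equals
\[
  \frac{(n)_{k}}{\prod_i k_i!}\prod_{i=1}^q c(k_i,k_i+r_i-1)\,
  p_n^{\sum_i(k_i+r_i-1)}(1-p_n)^{\binom{k}{2}-\sum_i(k_i+r_i-1)+k(n-k)},
  \qquad k=\sum_i k_i .
\]
For \(q\) with \(qa n^{2/3}>n\) the sum is empty. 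So only \(q\le a^{-1}n^{1/3}\) matters, that is, \(k\le n\) throughout.

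The main step is a \emph{uniform} (non-asymptotic) version of the logarithmic cancellation described in the introduction. First, bound the connected graph counts by Wright-type inequalities valid for all \(k\): \(c(k,k+r-1)\le C_R k^{k+3r/2-2}\) for \(r\le R\). Stirling then gives \(k_i^{k_i}/k_i!\le e^{k_i}(2\pi k_i)^{-1/2}\). Next, take logarithms of the remaining factor \((n)_k\,p_n^{k-q+\sum r_i}(1-p_n)^{\binom k2+k(n-k)-\cdots}\). Use the one-sided inequalities
\[
  \log\frac{(n)_k}{n^k}\le -\frac{k^2}{2n}-\frac{k^3}{6n^2}+\frac{k}{2n},
  \qquad \log(1-p)\le -p-\frac{p^2}{2}.
\]
These are exact inequalities, not asymptotics, since every omitted term has a favourable sign. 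With them I aim to show that the total exponent is at most
\[
  -c\,\frac{k^3}{n^2}+\frac{\lambda k^2}{2n^{4/3}}+C\frac{k}{n^{2/3}}+Cq
\]
for all \(k\le n\), with \(c>0\) (possibly smaller than \(1/6\)) and \(C\) depending only on \(\lambda\). Here \(k/n\le1\) lets the quartic and higher terms be absorbed into the cubic one. Factors such as \((np_n)^{k}\le e^{\lambda k n^{-1/3}}\) and \(p_n^{-q}\le (2n)^q\) must be tracked. The factor \(n^{q}\) coming from \(p_n^{-q}\) pairs with the \(n^{-2/3}\) per coordinate from the Riemann sum and with \(k_i^{3r_i/2-5/2}\), giving \(n^{-1}\cdot n\) per component. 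I expect this bookkeeping of the powers of \(n\) to be the main obstacle. I will check it against the leading asymptotics already known from Proposition~\ref{prop:local}.

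After rescaling \(x_i=k_i n^{-2/3}\in[a,b]\), each summand is at most \(n^{-2q/3}\,C^q\prod_i\rho^*(x_i)\,\exp\{-c\,X^3+\tfrac{|\lambda|}{2}X^2+CX\}\) with \(X=\sum x_i\ge qa\). Here \(\rho^*\) is bounded on \([a,b]\) by a constant depending on \(a,b,R\). The number of \(k\)-tuples is at most \(((b-a)n^{2/3}+1)^q\), and there are \((R+1)^q\) surplus choices, so the factor \(n^{-2q/3}\) is absorbed and the bound becomes \(C_1^q\exp\{-c(qa)^3+\tfrac{|\lambda|}2(qb)^2+Cqb\}\). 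The quadratic and linear terms in \(q\) are dominated by \(\tfrac c2 a^3q^3\) up to a factor \(C_2^q\). This yields \(C_K^qe^{-c_Kq^3}\) with \(c_K=ca^3/2\).

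For the uniformity over \(|\lambda|\le L\), every constant above depends on \(\lambda\) only through upper bounds on \(|\lambda|\). I will choose \(n_L\) so that \(|\lambda|n^{-1/3}\le 1/2\), which ensures \(p_n\in(0,1)\) and \(p_n\le 2/n\). The same argument then gives constants depending only on \(K\) and \(L\).
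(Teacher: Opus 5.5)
Your proposal is correct. It shares the paper's skeleton: the exact tuple formula, the uniform Wright bound of Lemma~\ref{lem:wright-bound} with Stirling, exact cancellation of the $K$, $\eta K$ and $K^2/(2n)$ terms, and summing at most $C^q n^{2q/3}$ tuples against $n^{-2q/3}$ per summand. The difference is in how the exponent estimate is organized.

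The paper proves $H_n(K,R,q)\le -c_1S^3+C_3q$ in two cases. For $K\le n/2$ it uses the same one-sided Taylor bounds you use. For $K>n/2$ it runs a separate Stirling computation, $H_n=ng(\theta)+O(n^{2/3}+\log n)$ with $g(\theta)=\int_0^\theta\log(1-u)\,\dd u+\theta^2/2$ bounded above by a negative constant on $[1/2,1]$.

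Your single chain makes that second case unnecessary, for three reasons:
\begin{itemize}
\item $\log(1-x)\le -x-x^2/2$ holds on all of $[0,1)$, not just $[0,1/2]$.
\item $A\ge0$ and $K+R-q\ge0$ for every contributing tuple.
\item The leftover remainders ($K/n$, $K^2/n^2$, $\eta K/n$, $|d|\,|\eta|$, $|d|/n$) are $O(1+q)$ whenever $K\le n$.
\end{itemize}
So $H_n\le -S^3/6+|\lambda|S^2/2+Cq$ throughout $K\le n$. The paper's Case~2 only improves the constant near $K=n$ (roughly $-n/2$ instead of $-n/6$), which the theorem does not need. Note that the real role of $k\le n$ is to bound these remainders: with one-sided inequalities no quartic terms remain to be absorbed.

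Two small repairs are needed.

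\emph{The falling-factorial inequality.} Summing $\log(1-j/n)\le -j/n-j^2/(2n^2)$ over $0\le j\le k-1$ gives
\[
  \log\frac{(n)_k}{n^k}\le -\frac{k^2}{2n}+\frac{k}{2n}-\frac{k^3}{6n^2}+\frac{k^2}{4n^2}-\frac{k}{12n^2}.
\]
Your displayed inequality omits the $+k^2/(4n^2)$ term and is false as stated; it already fails at $k=2$, $n=10$. Since $k\le n$, the correction is at most $1/4$, so it is harmlessly absorbed into $Cq$.

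\emph{Small $n$ for fixed $\lambda$.} The theorem requires every $n\ge n_\lambda$, but your estimates need $|\lambda|n^{-1/3}\le 1/2$. Handle the finitely many intermediate indices as the paper does. Let $n_*$ be their maximum, so $\Xi_n(K)\le n_*$ for those $n$. Then the $q$th factorial moment vanishes for $q>n_*$ and is at most $n_*^q$ otherwise. Keeping $c$ fixed and replacing $C$ by $\max\{C,n_*e^{cn_*^2}\}$ covers these indices.
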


\begin{remark}
The cubic order in \eqref{eq:main-uniform-bound} is optimal at the level of
the limiting factorial measures; see Proposition~\ref{prop:cubic-optimal}
below.
\end{remark}

\begin{theorem}[Finite \(n\) enumerative convergence]\label{thm:main}
Fix \(\lambda\in\mathbb R\), let \(p_n\) be given by
\eqref{eq:pn-intro}, and let \(\Xi_n\) be given by \eqref{eq:Xi-intro}.

\begin{enumerate}[label=(\roman*),leftmargin=2.2em]
\item For every fixed \(q\ge1\), the factorial moment measures satisfy
\begin{equation}\label{eq:fmm-vague}
  \alpha_{\Xi_n}^{(q)}\xrightarrow{v}\alpha_\lambda^{(q)}
  \qquad\text{on }E^q.
\end{equation}
In fact, the stronger uniform local asymptotic in
Proposition~\ref{prop:local} holds.

\item There is a unique point process
\(\Xi_\lambda^{\mathrm{enum}}\) on \(E\) whose Laplace functional is
\begin{equation}\label{eq:enum-Laplace-theorem}
\begin{aligned}
  \mathbb E e^{-\langle f,\Xi_\lambda^{\mathrm{enum}}\rangle}
  =\sum_{q=0}^\infty\frac{(-1)^q}{q!}
  \int_{E^q}\prod_{i=1}^q(1-e^{-f(z_i)})
  \,\mathrm d\alpha_\lambda^{(q)}(\mathbf z)
\end{aligned}
\end{equation}
for every \(f\in C_c^+(E)\), and
\begin{equation}\label{eq:enum-convergence}
  \Xi_n\Rightarrow\Xi_\lambda^{\mathrm{enum}}
  \qquad\text{in }M_{\mathrm p}(E)
\end{equation}
under the vague topology.
\end{enumerate}
In \eqref{eq:enum-Laplace-theorem}, the \(q=0\) term is one: by convention
\(\alpha_\lambda^{(0)}\) is unit mass on the singleton \(E^0\), and the empty
product equals one.
\end{theorem}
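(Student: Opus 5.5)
The plan is to derive part (i) from the exact finite \(n\) tuple formula, and then to obtain part (ii) from part (i) together with Theorem~\ref{thm:uniform-bound}. The order-uniform bound is what lets us interchange the limit \(n\to\infty\) with the inclusion and exclusion sum over \(q\).

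For (i), fix \(q\) and prescribed sizes \(k_1,\dots,k_q\) and surpluses \(r_1,\dots,r_q\), and put \(K=\sum k_i\). The expected number of ordered tuples of distinct components with these sizes and surpluses is obtained by four choices. We choose disjoint labelled vertex sets, which gives the factor \((n)_K/\prod k_i!\). We place a connected graph on each set, giving \(\prod c(k_i,k_i+r_i-1)\). We require the \(\sum_i(k_i+r_i-1)\) chosen edges to be present. Finally, we require every other pair inside a set, every pair between different sets, and every pair from the union to the remaining \(n-K\) vertices to be absent. This is the content of Proposition~\ref{prop:exact}.

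Next take \(k_i=\lfloor x_in^{2/3}\rfloor\), with \((x_i,r_i)\) ranging over a compact set. We insert Wright's asymptotic \eqref{eq:wright}, apply Stirling's formula to each \(k_i!\), and expand \(\log(n)_K\), \(\log p_n\) and \(\log(1-p_n)\) to third order. The terms of order \(K\) and \(K^2/n\) cancel, and what remains is \(-K^3/(6n^2)+\lambda K^2/(2n^{4/3})-\lambda^2K/(2n^{2/3})\to -F(\sum x_i,\lambda)\). The power prefactors then produce \(n^{-2q/3}\prod\rho_{r_i}(x_i)(1+o(1))\), uniformly on the compact set; this is Proposition~\ref{prop:local}. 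The factor \(n^{-2q/3}\) is exactly the mesh of the rescaled lattice in the size coordinates. Hence for \(H\in C_c(E^q)\) the integral \(\int H\,\dd\alpha^{(q)}_{\Xi_n}\) is a Riemann sum converging to \(\int H\,\dd\alpha^{(q)}_\lambda\), which gives \eqref{eq:fmm-vague}.

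For (ii), fix \(f\in C_c^+(E)\) supported in a compact \(K\), and put \(g=1-e^{-f}\in[0,1)\). At finite \(n\), \(\Xi_n(K)\) is finite, and
\[
  e^{-\langle f,\Xi_n\rangle}=\prod_{z\in\Xi_n}(1-g(z))=\sum_{q\ge0}\frac{(-1)^q}{q!}\sum^{\neq}_{z_1,\ldots,z_q\in\Xi_n}\prod_{i=1}^q g(z_i).
\]
Because \(0\le g\le1\), the absolute value of the \(q\)th term is at most \((\Xi_n(K))_q/q!\). Taking expectations and using Theorem~\ref{thm:uniform-bound}, the absolute series is bounded by \(\sum_q C_K^qe^{-c_Kq^3}/q!<\infty\) uniformly in \(n\), so Fubini gives the expansion with the measures \(\alpha^{(q)}_{\Xi_n}\). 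By (i), each term converges to the corresponding integral against \(\alpha^{(q)}_\lambda\), since \(g^{\otimes q}\) is continuous with compact support. The limit measures satisfy the same bound, because \(\alpha^{(q)}_\lambda(K^q)\le\liminf_n\alpha^{(q)}_{\Xi_n}(U^q)\) for a slightly larger compact neighbourhood \(U\) of \(K\). Dominated convergence over \(q\) therefore yields \(\E e^{-\langle f,\Xi_n\rangle}\to L(f)\), where \(L(f)\) denotes the right side of \eqref{eq:enum-Laplace-theorem}.

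Tightness in \(\Mp(E)\) follows from \(\sup_n\E\,\Xi_n(K)<\infty\) for each compact \(K\), via the Kallenberg criterion. By Prokhorov's theorem, every subsequence has a further subsequence converging to some point process \(\Xi\), using that \(\Mp(E)\) is closed, as shown above. Laplace functionals of \(C_c^+\) functions are continuous under vague convergence, so every such subsequential limit has Laplace functional \(L\). Since Laplace functionals on \(C_c^+(E)\) determine the law, all subsequential limits coincide. This gives existence and uniqueness of \(\Xi^{\mathrm{enum}}_\lambda\) and the full convergence \eqref{eq:enum-convergence}.

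The main obstacle is the uniform local asymptotic in (i). The \(\log(1-p_n)\) exponent is of order \(K n p_n\sim n^{2/3}\), so the expansion must be carried to third order with error terms uniform on the compact set. The cancellation of the linear and quadratic terms must also be verified exactly. Stirling's formula and Wright's asymptotic only need to hold for \(k\asymp n^{2/3}\to\infty\), which is immediate since \(K\) stays away from zero in the size coordinate.
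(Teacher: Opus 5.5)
Your proposal is correct and follows the paper's proof essentially step for step. For (i) you use the exact ordered tuple formula, the uniform local asymptotic and a Riemann sum argument. For (ii) you use the pathwise inclusion--exclusion expansion with the order-uniform bound of Theorem~\ref{thm:uniform-bound} as a summable majorant, termwise convergence from (i), tightness from first moments, and Laplace functional uniqueness. The one cosmetic difference is how you bound the limiting terms: you go through a compact neighbourhood \(U\supset K\) and a portmanteau inequality, whereas the paper simply passes to the limit in the bound \(|a_{n,q}|\le C_K^qe^{-c_Kq^3}/q!\) on the expected \(q\)th terms. Both work.
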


The next two corollaries show that the finite \(n\) enumeration, together
with the classical Janson--Spencer identification and tail estimates,
recovers the fixed \(\lambda\) marked point process and ordered \(\ell^2\)
component size conclusions of
Aldous~\cite[Theorem~3 and Corollary~2]{Aldous1997}.

\begin{corollary}[Classical identification with the Aldous process]
\label{cor:aldous-identification}
The process constructed in Theorem~\ref{thm:main} satisfies
\[
  \Xi_\lambda^{\mathrm{enum}}
  \stackrel{d}{=}\Xi_\lambda^{\mathrm A}.
\]
Consequently,
\begin{equation}\label{eq:main-convergence}
  \Xi_n\Rightarrow\Xi_\lambda^{\mathrm A}
  \qquad\text{in }M_{\mathrm p}(E).
\end{equation}
\end{corollary}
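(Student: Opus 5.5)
The plan is to prove Corollary~\ref{cor:aldous-identification} at the level of factorial moment measures. Both processes have all local factorial moments growing slowly enough that the measures determine the law, so it suffices to show
\[
  \alpha^{(q)}_{\Xi_\lambda^{\mathrm A}}=\alpha_\lambda^{(q)}
  \quad\text{for every }q\ge1 .
\]
Equality of laws then follows from the Laplace functional expansion \eqref{eq:enum-Laplace-theorem}. That expansion is absolutely convergent for \(f\in C_c^+(E)\), because the limiting factorial measures satisfy the cubic bound of Theorem~\ref{thm:uniform-bound}; this passes to the limit by Fatou or directly from the explicit densities \eqref{eq:mq-def}. Once the Laplace functionals coincide, \eqref{eq:main-convergence} is immediate from \eqref{eq:enum-convergence}.

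\textbf{Step 1: the Janson--Spencer description.} Janson and Spencer~\cite{JansonSpencer2007} describe the lifetimes \((\zeta_j)\) of the excursions of \(\overline W_\lambda\) by an intensity \(\rho(x)e^{-F(x,\lambda)}\) and a Palm shift. Removing an excursion of length \(x\) leaves the excursion process with parameter \(\lambda-x\). Given the lifetimes, the marks are independent across excursions. An excursion of length \(x\) has surplus \(r\) with probability
\[
  \frac{\rho_r(x)}{\sum_s\rho_s(x)}
  =\frac{w_r x^{3r/2}}{\Psi(x^{3/2})},
\]
which comes from the Poisson mixture over Brownian excursion areas (Spencer~\cite{Spencer1997}). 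Iterating the Palm shift \(q\) times gives the \(q\)-point density
\[
  \prod_{i}\rho(x_i)\,
  \exp\Bigl\{-\sum_{i=1}^q F\bigl(x_i,\lambda-x_1-\cdots-x_{i-1}\bigr)\Bigr\}.
\]
The cocycle identity
\[
  F(x+y,\lambda)=F(x,\lambda)+F(y,\lambda-x),
\]
which follows from the integral form in \eqref{eq:F-def} after substituting \(u\mapsto u-x\), collapses this exponent to \(-F(\sum x_i,\lambda)\). Attaching the independent marks multiplies the density by \(\prod_i\rho_{r_i}(x_i)/\rho(x_i)\), which yields exactly \eqref{eq:mq-def}.

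\textbf{Step 2: the unmarked intensity.} The point to check is that the unmarked Janson--Spencer intensity has the form \(\rho(x)=\sum_r\rho_r(x)=(2\pi)^{-1/2}x^{-5/2}\Psi(x^{3/2})\). In the notation of \cite{JansonSpencer2007} this is their Palm formula combined with \(\E e^{-?}\)-type identities relating \(\Psi\) to the Laplace transform of the excursion area. If needed, I would verify the \(r=0\) term against the tree enumeration and rely on their statement for the marked version.

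\textbf{Step 3: determinacy.} For a compact \(K\), the counts \(\Xi(K)\) have \(\E(\Xi(K))_q\le C^qe^{-cq^3}\). Therefore \(\E s^{\Xi(K)}\) is entire in \(s\), and finite-dimensional distributions on finite unions of compact sets are determined by the factorial moments. Equivalently, the series \eqref{eq:enum-Laplace-theorem} converges absolutely and determines the Laplace functional for \(\Xi_\lambda^{\mathrm A}\) as well.

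\textbf{Main obstacle.} The hardest part is matching conventions: normalization of \(w_r\), the meaning of the Palm shift in \cite{JansonSpencer2007}, and confirming that the marks are conditionally independent given sizes under the Palm measure. I would resolve this by citing their theorem in its marked form and checking the normalization through \(w_0=1\).
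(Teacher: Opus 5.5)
Your proposal is correct and follows essentially the paper's route: the Janson--Spencer intensity and conditional surplus law, the iterated Palm shift, the telescoping identity for $F$, and matching Laplace functionals through the absolutely convergent factorial expansion. One caution: $w_rx^{3r/2}/\Psi(x^{3/2})$ is a Poisson mixture over the area of a Brownian excursion tilted by the exponential of its area (the Girsanov effect of the parabolic drift), not over the plain excursion area, so cite Janson--Spencer's Theorem~3.1 for it rather than Spencer alone.

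The only real difference is how you justify term-by-term expectation for $\Xi_\lambda^{\mathrm A}$. You use the cubic bound on its identified factorial moments, via Tonelli's identity $\E 2^{\Xi_\lambda^{\mathrm A}(K)}=\sum_{q\ge0}\alpha_\lambda^{(q)}(K^q)/q!<\infty$, whereas the paper cites the exponential moment bound in Janson--Spencer's Remark~8.8; both are valid.
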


\begin{corollary}[Ordered component sizes and largest marked components]
\label{cor:l2-convergence}
The following two conclusions hold.
\begin{enumerate}[label=(\roman*),leftmargin=2.3em]
\item \(\mathbf X_n\) satisfies
\begin{equation}\label{eq:l2-convergence}
  \mathbf X_n\Rightarrow\boldsymbol\zeta_\lambda
  \qquad\text{in }\ell^2_\downarrow.
\end{equation}
\item Arrange the atoms of \(\Xi_\lambda^{\mathrm A}\) in decreasing order of
their size coordinates and denote them by
\((\zeta_{\lambda,i},N_{\lambda,i})_{i\ge1}\).  For every fixed \(m\ge1\),
\begin{equation}\label{eq:ranked-marked-convergence}
  \bigl(n^{-2/3}|C_{n,i}|,\spn(C_{n,i})\bigr)_{i=1}^m
  \Rightarrow
  \bigl(\zeta_{\lambda,i},N_{\lambda,i}\bigr)_{i=1}^m.
\end{equation}
The conclusion is independent of the rule used to break equal size ties in
the finite graph.
\end{enumerate}
\end{corollary}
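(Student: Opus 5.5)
The plan is to deduce both parts from the local marked convergence \eqref{eq:main-convergence}, after removing three truncations: small sizes, large sizes and large surpluses. I do not re-derive Aldous's theorem; I only use that \(\boldsymbol\zeta_\lambda\in\ell^2_\downarrow\) almost surely, that \(\Xi_\lambda^{\mathrm A}\) has infinitely many atoms, and that \(\Xi_\lambda^{\mathrm A}\) has almost surely distinct size coordinates. The last fact I would check from the limiting second factorial density \(m^{(2)}_\lambda\), which is absolutely continuous in the sizes and so gives the diagonal \(\{x_1=x_2\}\) zero \(\alpha^{(2)}_\lambda\)-mass.

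\textbf{Step 1 (unmarked local convergence).} Let \(\Xi_n^{\circ}=\sum_C\delta_{n^{-2/3}|C|}\) be the size projection on \((0,\infty)\). For a compact \(I\subset(0,\infty)\), the all-surplus version of Theorem~\ref{thm:uniform-bound} bounds \(\E[(\Xi_n^\circ(I))_q]\) by \(C_I^q e^{-c_Iq^3}\) uniformly in \(n\); this is the summed-over-surplus estimate, using the all-surplus connected graph bound \cite[(3.7)]{JansonSpencer2007}. Together with the fixed-\(r\) local asymptotics, it shows that the contribution of surpluses \(r>R\) inside \(I\) is small uniformly in \(n\). Consequently the Laplace functional of \(\Xi_n^\circ\) converges to that of the size projection of \(\Xi_\lambda^{\mathrm A}\), which is the unmarked local convergence of Proposition~\ref{prop:unmarked-local}.

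\textbf{Step 2 (from vague to \(\ell^2\)).} For \(0<\varepsilon<M\), consider the map \(T_{\varepsilon,M}\) sending a point measure on \((0,\infty)\) to the decreasing list of its atoms in \([\varepsilon,M]\), padded with zeros. This map is continuous into \(\ell^2_\downarrow\) at every measure with no atoms at \(\varepsilon\) or \(M\), and such \(\varepsilon,M\) are all but countably many values for the limit. By Step 1, \(T_{\varepsilon,M}(\Xi_n^\circ)\Rightarrow T_{\varepsilon,M}(\boldsymbol\zeta_\lambda)\). We then need
\[
\lim_{\varepsilon\downarrow0,\,M\uparrow\infty}\limsup_n \Prob\bigl\{\|\mathbf X_n-T_{\varepsilon,M}(\Xi^\circ_n)\|_2>\eta\bigr\}=0.
\]
The small-size part is handled by Markov's inequality and \(\sup_n\E\sum_{X_{n,i}\le\varepsilon}X_{n,i}^2=O(\sqrt\varepsilon)\). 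The large-size part \(\Prob\{X_{n,1}>M\}\) is handled by the macroscopic mass estimate, Proposition~\ref{prop:macro-mass}, which rests on \cite[(5.3)]{JansonSpencer2007}. The same two limits hold for \(\boldsymbol\zeta_\lambda\) because \(\boldsymbol\zeta_\lambda\in\ell^2_\downarrow\). A standard approximation lemma (Billingsley's Theorem 3.2) then gives \eqref{eq:l2-convergence}.

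\textbf{Step 3 (ranked marked atoms).} Fix \(m\). Since \(\zeta_{\lambda,m}>0\) almost surely, choose \(\varepsilon\) and \(M\), not charged by the limit, with \(\Prob\{\zeta_{\lambda,m}<\varepsilon\}\) and \(\Prob\{\zeta_{\lambda,1}>M\}\) small. Also choose \(R\) with the surplus tail above \(R\) on \([\varepsilon,M]\) small, uniformly in \(n\), as in Step 1. On the compact window \(K=[\varepsilon,M]\times\{0,\dots,R\}\), the map taking \(\xi|_K\) to its top \(m\) marked atoms, ordered by size, is continuous at every \(\xi\) with distinct sizes in \(K\), at least \(m\) atoms, and no atoms on the boundary.

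At finite \(n\) ties may occur and are broken arbitrarily, so the map applied to \(\Xi_n\) is some measurable selection \(h_n\). I would use the extended continuous mapping theorem: \(h_n(\xi_n)\to h(\xi)\) whenever \(\xi_n\to\xi\) vaguely with \(\xi\) in the continuity set, regardless of how \(h_n\) breaks ties. This makes \eqref{eq:ranked-marked-convergence} independent of the tie rule. The remaining discrepancy between the true ranked components and the window version is bounded by the three chosen tail probabilities, which are made small by Step 2 and Step 1.

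The main obstacle is the uniform-in-\(n\) surplus tail in Step 1. Summing the fixed-\(r\) Wright asymptotics over \(r\) is not legitimate without a bound on \(c(k,k+r-1)\) that is uniform in \(r\). I would first try \(c(k,k+r-1)\le C\,w_r' k^{k+3r/2-2}\) with \(\sum_r w_r' t^r\) finite for bounded \(t\), as in \cite[(3.7)]{JansonSpencer2007}, and insert it into the exact tuple formula of Proposition~\ref{prop:exact}. The generating function \(\Psi\) evaluated at \(x^{3/2}\) then replaces \(\rho_r\) in the dominating bound, and since \(x\le M\) this remains finite.
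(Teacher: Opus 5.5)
Your proposal is correct and follows essentially the paper's route: unmarked local convergence from the all-surplus bound, then the truncation \(T_{\varepsilon,M}\) combined with the \(O(\sqrt\varepsilon)\) small-component estimate, the macroscopic mass bound and Billingsley's Theorem~3.2 for part (i), and the window map on \([\varepsilon,M]\times\{0,\ldots,R\}\) with the uniform surplus tail for part (ii). The only variation is how you handle finite-\(n\) size ties. You use an extended continuous-mapping argument on the window; this needs only that every nonincreasing ordering of the window atoms coincides once there are no ties there, since the tie rule may depend on the graph rather than on \(\Xi_n\). The paper instead deduces \(\Prob\{\min_{i\le m}(X_{n,i}-X_{n,i+1})=0\}\to0\) from the already-proved \(\ell^2\) convergence and the absence of limiting ties; both work.
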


\section{Finite \texorpdfstring{\(n\)}{n} enumeration and the marked local
limit}
\label{sec:finite-n}

This section proves the marked local limit from finite graph enumeration.
The exact ordered tuple formula and its critical window asymptotic give
Theorem~\ref{thm:main}(i).  The factorial order uniform bound in
Theorem~\ref{thm:uniform-bound} then supplies the absolute control needed for
the Laplace functional proof of Theorem~\ref{thm:main}(ii).  The final
subsection identifies the resulting limit with Aldous's marked excursion
process.

\begin{lemma}[Uniform fixed surplus Wright bound]\label{lem:wright-bound}
For every integer \(R_0\ge0\), there is \(C=C(R_0)<\infty\) such that
\begin{equation}\label{eq:wright-upper}
  c(k,k+r-1)
  \le C k^{k+3r/2-2}
\end{equation}
for all \(k\ge1\) and \(0\le r\le R_0\), with the convention that the left
side is zero when the edge count is infeasible.
\end{lemma}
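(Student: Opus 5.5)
The bound follows from Wright's asymptotic \eqref{eq:wright} together with a compactness argument over the finitely many surplus values \(0\le r\le R_0\). Fix \(r\) in this range. By \eqref{eq:wright} there is \(k_0(r)\) such that
\[
  c(k,k+r-1)\le 2w_r k^{k+3r/2-2}\qquad\text{for all } k\ge k_0(r).
\]
For the finitely many \(k<k_0(r)\), either \(c(k,k+r-1)=0\), because \(k+r-1\) exceeds \(\binom k2\) or is negative (the latter only when \(k=1\), \(r=0\) is excluded, and there \(c(1,0)=1\)), or \(c(k,k+r-1)\) is a finite positive number. Since \(k^{k+3r/2-2}>0\) for \(k\ge1\), the ratio \(c(k,k+r-1)/k^{k+3r/2-2}\) is bounded over \(k<k_0(r)\) by a finite maximum \(M_r\). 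Set
\[
  C(R_0)=\max_{0\le r\le R_0}\max(2w_r,M_r).
\]
Because only finitely many \(r\) occur, this maximum is finite, and it gives \eqref{eq:wright-upper}.

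The key step is just invoking \eqref{eq:wright} for each fixed \(r\); this is exactly Wright's theorem, which gives asymptotics for every fixed \(r\) but is not uniform in \(r\). This non-uniformity is why \(R_0\) must be fixed and the constant depends on it. The only bookkeeping is the infeasible-edge-count convention and the trivial small-\(k\) cases: \(k=1\), where \(c(1,0)=1=1^{-1}\), and \(r=0\), where Cayley's formula even gives equality with \(C=1\).

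There is no genuine obstacle. If one wanted an explicit \(C\), one could instead invoke a uniform bound such as \(c(k,k+r-1)\le c_0 (c_1/r)^{r/2} k^{k+3r/2-2}\) (Bollobás; Janson–Spencer (3.7)). That bound is only needed later, for summing over all surpluses; for fixed \(R_0\) the finite-maximum argument suffices.
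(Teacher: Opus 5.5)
Your proposal is correct and is essentially the paper's argument. The paper simply asserts that for each fixed \(r\) Wright's asymptotic makes \(A_r=\sup_{k\ge1}c(k,k+r-1)/k^{k+3r/2-2}\) finite, and takes \(C=\max_{0\le r\le R_0}A_r\); you spell out the split into large \(k\) (bound \(2w_r\)) and finitely many small \(k\). Your parenthetical about negative edge counts is unnecessary, since \(k+r-1\ge0\) always holds for \(k\ge1\), \(r\ge0\).
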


\begin{proof}
For each fixed \(r\), Wright's asymptotic \eqref{eq:wright} implies that
\[
  A_r:=\sup_{k\ge1}
  \frac{c(k,k+r-1)}{k^{k+3r/2-2}}<\infty,
\]
where infeasible edge counts contribute zero.  Since
\(\{0,\ldots,R_0\}\) is finite, \(C=\max_{0\le r\le R_0}A_r\) gives
\eqref{eq:wright-upper} uniformly for all \(k\ge1\) and
\(0\le r\le R_0\).
\end{proof}

\subsection{Exact tuple enumeration and local asymptotics}

This subsection derives the fixed order factorial densities from exact
component tuple enumeration and then proves Theorem~\ref{thm:main}(i).

Fix \(q\ge1\), integers \(k_i\ge1\), and surpluses \(r_i\ge0\).  Write
\(\mathbf k=(k_1,\ldots,k_q)\), \(\mathbf r=(r_1,\ldots,r_q)\), and
\begin{equation}\label{eq:KR-def}
  K=\sum_{i=1}^qk_i,\qquad R=\sum_{i=1}^qr_i.
\end{equation}
We use the notation
\begin{equation}\label{eq:Ln-def}
  L_n(\mathbf k,\mathbf r)=\E\sum_{C_1,\ldots,C_q\in\mathcal C(G(n,p_n))}^{\neq}\prod_{i=1}^q\one_{\{|C_i|=k_i,\ \spn(C_i)=r_i\}}.
\end{equation}

\begin{proposition}[Exact ordered tuple formula]\label{prop:exact}
If \(K\le n\) and
\(k_i+r_i-1\le\binom{k_i}{2}\) for every \(i\), then
\begin{equation}\label{eq:exact}
  L_n(\mathbf k,\mathbf r)=\frac{(n)_K}{\prod_{i=1}^q k_i!}\prod_{i=1}^q c(k_i,k_i+r_i-1)\,p_n^{K+R-q}(1-p_n)^{A},
\end{equation}
where
\begin{equation}\label{eq:A-def}
\begin{aligned}
  A
  &=K(n-K)
    +\sum_{1\le i<j\le q}k_ik_j
    +\sum_{i=1}^q
      \left\{\binom{k_i}{2}-(k_i+r_i-1)\right\}
  \\
  &=Kn-\frac{K^2}{2}-\frac{3K}{2}-R+q.
\end{aligned}
\end{equation}
If the feasibility conditions fail, then \(L_n(\mathbf k,\mathbf r)=0\).
\end{proposition}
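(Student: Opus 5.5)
The plan is to write \(L_n(\mathbf k,\mathbf r)\) as a sum over ordered tuples of disjoint labelled vertex sets together with connected graph structures on them, and then use the independence of edges in \(G(n,p_n)\). By linearity of expectation,
\[
  L_n(\mathbf k,\mathbf r)=\sum_{(V_1,H_1),\ldots,(V_q,H_q)}\Prob\{H_1,\ldots,H_q\text{ are components of }G(n,p_n)\}.
\]
Here the sum runs over ordered tuples of pairwise disjoint vertex sets \(V_i\subset[n]\) with \(|V_i|=k_i\). Each \(H_i\) is a connected simple graph on \(V_i\) with exactly \(k_i+r_i-1\) edges, so that \(\spn(H_i)=r_i\). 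Distinct atom indices in \(\Xi_n\) correspond to distinct components, so ordered tuples of distinct components are exactly such data. Distinct components are automatically vertex disjoint, and conversely each admissible datum determines at most one ordered tuple.

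\emph{Counting the configurations.} The number of ordered tuples of disjoint sets is
\[
  \frac{(n)_K}{\prod_i k_i!}.
\]
To see this, choose an ordered sequence of \(K\) distinct vertices and cut it into consecutive blocks of sizes \(k_1,\ldots,k_q\); each block is counted \(k_i!\) times. For each block there are \(c(k_i,k_i+r_i-1)\) choices of \(H_i\), by the definition of \(c\).

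\emph{Probability of a single configuration.} The event that \(H_1,\ldots,H_q\) are components holds exactly when three conditions are met. First, all \(\sum_i(k_i+r_i-1)=K+R-q\) edges of the \(H_i\) are present. Second, every other pair inside each \(V_i\) is absent; there are \(\binom{k_i}{2}-(k_i+r_i-1)\) such pairs in block \(i\). Third, every pair with one endpoint in \(V_i\) and the other outside \(V_i\) is absent. The pairs in the third condition split into two kinds:
\begin{itemize}
\item pairs between \(\bigcup_i V_i\) and its complement, numbering \(K(n-K)\);
\item pairs between \(V_i\) and \(V_j\) for \(i<j\), numbering \(\sum_{i<j}k_ik_j\).
\end{itemize}
The prescribed pairs are all distinct, and there are no constraints on pairs outside \(\bigcup_i V_i\). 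Edge independence therefore gives \(p_n^{K+R-q}(1-p_n)^A\) with \(A\) as in the first line of \eqref{eq:A-def}. The probability does not depend on the particular configuration, so multiplying by the number of configurations gives \eqref{eq:exact}.

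\emph{Closed form of \(A\).} This is the only computational step. Use
\[
  \sum_{i<j}k_ik_j=\frac{K^2-\sum_i k_i^2}{2},\qquad
  \sum_i\binom{k_i}{2}=\frac{\sum_i k_i^2-K}{2}.
\]
The \(\sum_i k_i^2\) terms cancel, leaving
\[
  A=Kn-K^2+\frac{K^2}{2}-\frac K2-K-R+q=Kn-\frac{K^2}{2}-\frac{3K}{2}-R+q.
\]

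\emph{Infeasible cases.} If \(K>n\), no disjoint tuple of sets exists. If \(k_i+r_i-1>\binom{k_i}{2}\) for some \(i\), then \(c(k_i,k_i+r_i-1)=0\), since a simple graph on \(k_i\) vertices has at most \(\binom{k_i}{2}\) edges. In either case the sum is empty and \(L_n(\mathbf k,\mathbf r)=0\).

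I expect no real obstacle. The one point needing care is the bijection between ordered tuples of distinct components and ordered tuples of disjoint labelled connected graphs, so that nothing is over- or undercounted.
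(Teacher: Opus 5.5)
Your proposal is correct and follows essentially the same route as the paper's proof. The paper likewise counts ordered disjoint vertex sets as \((n)_K/\prod_i k_i!\), chooses a connected labelled graph with \(k_i+r_i-1\) edges on each set, requires three classes of edges to be absent, uses edge independence, and simplifies \(A\) with the same two identities. Your explicit treatment of the component/configuration correspondence and of the infeasible cases just spells out what the paper leaves implicit.
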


\begin{proof}
Choose ordered pairwise disjoint vertex sets of cardinalities
\(k_1,\ldots,k_q\).  Their number is
\((n)_K/\prod_i k_i!\).  On set number \(i\), choose a connected labelled
graph with \(k_i+r_i-1\) edges.  Altogether this prescribes
\[
  \sum_{i=1}^q(k_i+r_i-1)=K+R-q
\]
present edges.

For the chosen subgraphs to be connected components, three classes of edges
must be absent: edges from the selected vertices to the remaining
\(n-K\) vertices; edges between two different selected sets; and all
unprescribed internal edges of the selected sets.  This gives the first
line of \eqref{eq:A-def}.  Since
\[
  \sum_{i<j}k_ik_j=\frac12\left(K^2-\sum_i k_i^2\right),
  \qquad
  \sum_i\binom{k_i}{2}=\frac12\left(\sum_i k_i^2-K\right),
\]
the first line simplifies to the second.  Independence of edges now gives
\eqref{eq:exact}.
\end{proof}

\begin{remark}
For \(q=1\), \eqref{eq:exact} is the exact single component expectation; its
critical window asymptotic is the single component calculation in
Janson--Spencer~\cite[(4.1)]{JansonSpencer2007}.  The proposition is the
simultaneous ordered \(q\) version of the same deletion calculation.
\end{remark}

\begin{proposition}[Local asymptotics]\label{prop:local}
Fix \(q\ge1\), a surplus vector
\(\mathbf r=(r_1,\ldots,r_q)\in\mathbb Z_{\ge0}^q\), and
\(0<a<b<\infty\).  Let \(k_i=k_i(n)\) range over integers such that
\begin{equation}\label{eq:local-grid}
  x_i:=k_i n^{-2/3}\in[a,b].
\end{equation}
Then, uniformly over all \(\mathbf k=(k_1,\ldots,k_q)\) satisfying
\eqref{eq:local-grid},
\begin{equation}\label{eq:local-asymptotic}
  L_n(\mathbf k,\mathbf r)=n^{-2q/3}m_\lambda^{(q)}\bigl((x_1,r_1),\ldots,(x_q,r_q)\bigr)(1+o(1)).
\end{equation}
The estimate is also uniform when \(\lambda\) ranges over a fixed compact
interval, with \(L_n\) formed using \(p_{n,\lambda}\).
\end{proposition}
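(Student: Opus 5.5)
The plan is to start from the exact formula of Proposition~\ref{prop:exact} and expand each factor to the precision needed, uniformly over \(x_i\in[a,b]\) and \(|\lambda|\le L\). Since \(K\le qbn^{2/3}\le n\) for large \(n\), and \(k_i\to\infty\) uniformly, the feasibility conditions hold eventually. By Wright's asymptotic \eqref{eq:wright} with the finitely many fixed \(r_i\), we get \(c(k_i,k_i+r_i-1)=w_{r_i}k_i^{k_i+3r_i/2-2}(1+o(1))\), uniformly because \(k_i\ge an^{2/3}\to\infty\). Stirling gives
\[
k_i!=\sqrt{2\pi k_i}\,k_i^{k_i}e^{-k_i}(1+o(1)),
\]
so the \(i\)th factor becomes \(\frac{w_{r_i}}{\sqrt{2\pi}}e^{k_i}k_i^{3r_i/2-5/2}(1+o(1))\). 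Writing \(k_i=x_in^{2/3}\) turns \(k_i^{3r_i/2-5/2}\) into \(x_i^{3r_i/2-5/2}n^{r_i-5/3}\). The product over \(i\) therefore yields \(\prod_i\rho_{r_i}(x_i)\,e^{K}n^{R-5q/3}\).

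Next I would handle the remaining factors logarithmically, keeping all terms that are \(O(1)\) and discarding terms that are \(o(1)\) uniformly, using \(K=O(n^{2/3})\).

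\begin{itemize}
\item For the falling factorial, \(\log (n)_K=K\log n+\sum_{j<K}\log(1-j/n)=K\log n-\frac{K^2}{2n}-\frac{K^3}{6n^2}+O(K/n^{1/3})\). Here the error \(O(K^4/n^3+K/n)\) is \(o(1)\), while \(K^2/(2n)\) is of order \(n^{1/3}\) and must be kept exactly.
\item For the present edges, \(p_n^{K+R-q}=n^{-(K+R-q)}(1+\lambda n^{-1/3})^{K+R-q}\). Its logarithm is \(-(K+R-q)\log n+K(\lambda n^{-1/3}-\tfrac{\lambda^2}{2}n^{-2/3})+o(1)\), since \(Kn^{-1}=o(1)\) and \((R-q)n^{-1/3}=o(1)\).
\item For the absent edges, \(\log(1-p_n)=-p_n-p_n^2/2+O(n^{-3})\). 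Multiplying by \(A=Kn-K^2/2+O(K)\) gives \(-K-\lambda Kn^{-1/3}+\frac{K^2}{2n}+\frac{\lambda K^2}{2n^{4/3}}-\frac{K}{2n}+o(1)\). In this step I must check carefully that the cross terms \(p_n\cdot K^2/2\) contribute \(\frac{K^2}{2n}+\frac{\lambda K^2}{2n^{4/3}}\), and that \(p_n^2 Kn/2\) is \(O(K/n)=o(1)\).
\end{itemize}

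Summing these, the \(e^{K}\) from Stirling cancels the \(-K\), the \(\pm K^2/(2n)\) terms cancel, and the \(\pm\lambda Kn^{-1/3}\) terms cancel. The powers of \(n\) combine as \(n^{K}\cdot n^{R-5q/3}\cdot n^{-(K+R-q)}=n^{-2q/3}\). What remains is \(-\frac{K^3}{6n^2}+\frac{\lambda K^2}{2n^{4/3}}-\frac{\lambda^2K}{2n^{2/3}}=-F(\sum x_i,\lambda)\), which is exactly \eqref{eq:local-asymptotic}.

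The main obstacle is bookkeeping rather than any conceptual step. One must track every term of order \(n^{1/3}\) and of order one, and verify that each discarded error is bounded by a power \(n^{-1/3}\) times a constant depending only on \(q,a,b,L,\max r_i\). Uniformity in \(\lambda\) then follows because \(\lambda\) enters only polynomially with bounded coefficients. Uniformity of the Wright and Stirling ratios holds because \(k_i\ge an^{2/3}\) for every admissible tuple.
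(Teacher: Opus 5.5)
Your proposal is correct and follows the paper's proof essentially step for step. Both arguments use Wright plus Stirling for each factor \(c(k_i,k_i+r_i-1)/k_i!\), Taylor expansions of the falling factorial, present edge and absent edge factors with errors \(O(n^{-1/3})\) uniform in \(|\lambda|\le L\), the same three cancellations (\(e^K\) against \(-K\), \(\pm K^2/(2n)\), \(\pm\lambda Kn^{-1/3}\)), and the power count giving \(n^{-2q/3}\). The only slip is the error term \(O(K/n^{1/3})\) in your falling factorial display, which would be of order \(n^{1/3}\); it should read \(O(K/n+K^4/n^3)=O(n^{-1/3})\), as your next sentence in fact states.
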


\begin{proof}
Because \(q\) and \(\mathbf r\) are fixed and \(k_i\asymp n^{2/3}\), all
feasibility conditions in Proposition~\ref{prop:exact} hold for large
\(n\).  We expand its factors uniformly over \eqref{eq:local-grid}.

Wright's asymptotic \eqref{eq:wright} and Stirling's formula give
\begin{equation}\label{eq:connected-over-factorial}
  \frac{c(k_i,k_i+r_i-1)}{k_i!}=\frac{w_{r_i}}{\sqrt{2\pi}}e^{k_i}k_i^{3r_i/2-5/2}(1+o(1)),
\end{equation}
uniformly in \(i\) and in the grid points under consideration.  Uniformity
follows because \(k_i\ge an^{2/3}\to\infty\) and only finitely many
surpluses occur.

Recall \(K=\sum_i k_i\) and \(R=\sum_i r_i\).  Since \(K=O(n^{2/3})\), Taylor
expansion of \(\sum_{j=0}^{K-1}\log(1-j/n)\) yields
\begin{equation}\label{eq:falling-local}
  \log\frac{(n)_K}{n^K}=-\frac{K(K-1)}{2n}-\frac{K(K-1)(2K-1)}{12n^2}+O\!\left(\frac{K^4}{n^3}\right)=-\frac{K^2}{2n}-\frac{K^3}{6n^2}+O(n^{-1/3}).
\end{equation}
The error is uniform because \(K\le qbn^{2/3}\).

Writing \(p_n=n^{-1}(1+\lambda n^{-1/3})\), and recalling that
\(R-q\) is fixed, we have
\begin{equation}\label{eq:present-local}
  (K+R-q)\log p_n=-(K+R-q)\log n+\lambda K n^{-1/3}-\frac{\lambda^2K}{2n^{2/3}}+O(n^{-1/3}).
\end{equation}
Furthermore \(A=O(n^{5/3})\), and
\(\log(1-p_n)=-p_n-p_n^2/2+O(p_n^3)\).  Substituting
\eqref{eq:A-def} gives
\begin{equation}\label{eq:absent-local}
  A\log(1-p_n)=-K+\frac{K^2}{2n}-\lambda K n^{-1/3}+\frac{\lambda K^2}{2n^{4/3}}+O(n^{-1/3}).
\end{equation}
The contribution of \(-Ap_n^2/2\) is \(O(n^{-1/3})\), and the higher order
terms contribute \(O(n^{-4/3})\); hence the displayed error is uniform.

In the logarithm of \eqref{eq:exact}, the factor \(e^K\) from
\eqref{eq:connected-over-factorial} cancels the \(-K\) in
\eqref{eq:absent-local}; the term \(-K^2/(2n)\) in
\eqref{eq:falling-local} cancels the \(+K^2/(2n)\) in
\eqref{eq:absent-local}; and the term \(+\lambda Kn^{-1/3}\) in
\eqref{eq:present-local} cancels the \(-\lambda Kn^{-1/3}\) in
\eqref{eq:absent-local}.  The remaining exponential term is
\begin{equation}\label{eq:critical-exponent-local}
  -\frac{K^3}{6n^2}+\frac{\lambda K^2}{2n^{4/3}}-\frac{\lambda^2K}{2n^{2/3}}+O(n^{-1/3}).
\end{equation}
Since \(K n^{-2/3}=S:=\sum_i x_i\), this is
\(-F(S,\lambda)+o(1)\), uniformly.

Finally, the remaining powers of \(n\) are
\begin{equation}\label{eq:power-local}
  n^{q-R}\prod_{i=1}^q k_i^{3r_i/2-5/2}=n^{q-R}n^{R-5q/3}\prod_{i=1}^q x_i^{3r_i/2-5/2}=n^{-2q/3}\prod_{i=1}^q x_i^{3r_i/2-5/2}.
\end{equation}
Combining \eqref{eq:connected-over-factorial}--\eqref{eq:power-local}
proves \eqref{eq:local-asymptotic}.  When \(\lambda\) ranges over a compact
interval, every displayed Taylor remainder is uniform in \(\lambda\), which
proves the final assertion.
\end{proof}

\begin{proof}[Proof of Theorem~\ref{thm:main}(i)]
Let \(H\in C_c(E^q)\).  The defining identity
\eqref{eq:fmm-definition}, initially stated for nonnegative functions,
extends to such signed functions by positive and negative parts.  The support
of \(H\) is contained in
\[
  \bigl([a,b]\times\{0,\ldots,R_0\}\bigr)^q
\]
for suitable \(0<a<b<\infty\) and \(R_0<\infty\).  There are only finitely
many surplus vectors, and
\[
  \int_{E^q}H\,\dd\alpha_{\Xi_n}^{(q)}
  =\sum_{\mathbf r}\sum_{\mathbf k}
  H\bigl((k_i n^{-2/3},r_i)_{i=1}^q\bigr)
  L_n(\mathbf k,\mathbf r),
\]
where only the vectors in the support of \(H\) contribute.  For each
\(\mathbf r\), Proposition~\ref{prop:local} turns this expression into a
Riemann sum whose cell volume is \(n^{-2q/3}\).  The uniform asymptotic
\eqref{eq:local-asymptotic} and the continuity of \(H\) now give
\eqref{eq:fmm-vague}.
\end{proof}

\subsection{Uniform bound in the factorial moment order}

Fixed order asymptotics alone do not control the full finite \(n\)
inclusion--exclusion expansion for the Laplace functional.  We now prove the
all order estimate that yields absolute summability and quantitative
truncation bounds.

\begin{proof}[Proof of Theorem~\ref{thm:uniform-bound}]
It is enough to prove the estimate for rectangles of the form
\begin{equation}\label{eq:K0-def}
  K_0=[a,b]\times\{0,\ldots,R_0\}\Subset E,
  \qquad 0<a<b<\infty.
\end{equation}
Indeed, every compact \(K\Subset E\) is contained in such a rectangle, and
falling factorials are monotone under inclusion of counting variables.  We
therefore prove
\begin{equation}\label{eq:uniform-bound}
  \E[(\Xi_n(K_0))_q]\le C^qe^{-cq^3},\qquad q\ge1,
\end{equation}
with \(C,c\) depending only on \(a,b,R_0,\lambda\), uniformly in
\(n\ge n_\lambda\).

A contributing tuple satisfies
\begin{equation}\label{eq:tuple-range}
  an^{2/3}\le k_i\le bn^{2/3},
  \qquad 0\le r_i\le R_0.
\end{equation}
By the definition of \(L_n(\mathbf k,\mathbf r)\), the factorial moment is
the sum
\begin{equation}\label{eq:factorial-sum-K0}
  \E[(\Xi_n(K_0))_q]=\sum_{r_1,\ldots,r_q=0}^{R_0}\sum_{\substack{an^{2/3}\le k_i\le bn^{2/3}\\1\le i\le q}}L_n(\mathbf k,\mathbf r),
\end{equation}
where terms with \(K>n\) or infeasible edge counts are zero.  For a term in
this sum put
\[
  x_i=k_i n^{-2/3},\qquad K=\sum_i k_i,\qquad
  R=\sum_i r_i,\qquad S=Kn^{-2/3}=\sum_i x_i.
\]
Then \(aq\le S\le bq\).  Tuples with \(K>n\) contribute zero, so assume
\(K\le n\).  In particular,
\begin{equation}\label{eq:q-feasible}
  q\le a^{-1}n^{1/3}.
\end{equation}
Take \(n\) large enough that \(p_n\in(0,1/2)\) and
\(|\lambda|n^{-1/3}\le1/2\).

By Lemma~\ref{lem:wright-bound} and a lower Stirling bound,
\begin{equation}\label{eq:enumerative-upper}
  \frac{c(k_i,k_i+r_i-1)}{k_i!}
  \le C_1 e^{k_i}k_i^{3r_i/2-5/2}
\end{equation}
uniformly over \eqref{eq:tuple-range}.  Substitute this in
\eqref{eq:exact}, write
\(\eta=\lambda n^{-1/3}\), and extract all powers of \(n\).  For each fixed
admissible choice of \(\mathbf k\) and \(\mathbf r\), the corresponding
contribution \(L_n(\mathbf k,\mathbf r)\) to \(\E[(\Xi_n(K_0))_q]\) is at most
\begin{equation}\label{eq:summand-upper}
  C_2^q n^{-2q/3}
  \prod_{i=1}^q x_i^{3r_i/2-5/2}
  \exp\{H_n(K,R,q)\},
\end{equation}
where
\begin{equation}\label{eq:H-def}
  H_n(K,R,q)=K+\log\frac{(n)_K}{n^K}+(K+R-q)\log(1+\eta)+A\log(1-p_n),
\end{equation}
and \(A\) is given by \eqref{eq:A-def}.  We prove
\begin{equation}\label{eq:H-target}
  H_n(K,R,q)\le-c_1S^3+C_3q.
\end{equation}

\smallskip
\noindent\emph{Case 1: \(K\le n/2\).}
For \(0\le j/n\le1/2\),
\(\log(1-j/n)\le-j/n-j^2/(2n^2)\).  Summing gives
\begin{equation}\label{eq:falling-upper}
  \log\frac{(n)_K}{n^K}\le-\frac{K(K-1)}{2n}-\frac{K(K-1)(2K-1)}{12n^2}\le-\frac{K^2}{2n}-\frac{K^3}{6n^2}+C\left(\frac Kn+\frac{K^2}{n^2}\right).
\end{equation}
For fixed \(\lambda\),
\begin{equation}\label{eq:log-plus-bound}
  \log(1+\eta)
  =\eta-\frac{\eta^2}{2}+\varepsilon_n,
  \qquad |\varepsilon_n|\le C_\lambda n^{-1}.
\end{equation}
For a contributing tuple, \(A\ge0\), so
\(\log(1-p_n)\le-p_n=-(1+\eta)/n\) may be multiplied by \(A\).  Let
\(d=R-q\), so \(|d|\le(R_0+1)q\), and note that
\[
  A=Kn-\frac{K^2}{2}-\frac{3K}{2}-d.
\]
Keeping the terms paired makes the uniformity in \(q\) explicit.  From
\(A=Kn-K^2/2-3K/2-d\),
\begin{equation}\label{eq:absent-upper-expanded}
  K-\frac{1+\eta}{n}A=\frac{K^2}{2n}+\frac{3K}{2n}+\frac dn-\eta K+\frac{\eta K^2}{2n}+\frac{3\eta K}{2n}+\frac{\eta d}{n},
\end{equation}
whereas
\begin{equation}\label{eq:present-upper-expanded}
\begin{aligned}
  (K+d)\log(1+\eta)
  &=K\eta+d\eta-\frac{K\eta^2}{2}-\frac{d\eta^2}{2}
    +(K+d)\varepsilon_n.
\end{aligned}
\end{equation}
Thus \(K\eta\) cancels \(-\eta K\), and the term \(K^2/(2n)\) in
\eqref{eq:absent-upper-expanded} cancels the corresponding negative term in
\eqref{eq:falling-upper}.  To make the uniformity explicit, all unretained
terms are bounded by
\begin{equation}\label{eq:remainder-small-explicit}
 C_{\lambda,R_0}\left(
   \frac Kn+\frac{K^2}{n^2}+|d|n^{-1/3}+1\right).
\end{equation}
Indeed, the first two terms cover the remainder in
\eqref{eq:falling-upper}; the terms containing \(d\) use
\(|d|\le(R_0+1)q\); and \((K+d)\varepsilon_n\),
\(\eta K/n\), \(\eta d/n\), and their smaller companions are covered using
\(K\le n/2\), \(|\eta|\le1/2\), and \eqref{eq:q-feasible}.  Finally,
\(K/n=Sn^{-1/3}\le bq n^{-1/3}\), \(K^2/n^2\le1\), and \(q\ge1\), so
\eqref{eq:remainder-small-explicit} is at most
\(C_{a,b,R_0,\lambda}q\).  Consequently,
\begin{equation}\label{eq:H-small}
\begin{aligned}
  H_n(K,R,q)
  &\le-\frac{K^3}{6n^2}+\frac{\lambda K^2}{2n^{4/3}}-\frac{\lambda^2K}{2n^{2/3}}+C_{\lambda,R_0}\left(\frac Kn+\frac{K^2}{n^2}+|d|n^{-1/3}+1\right)
  \\
  &\le-\frac{S^3}{6}+\frac{|\lambda|S^2}{2}
       +C_{a,b,R_0,\lambda}q.
\end{aligned}
\end{equation}
Performing these cancellations before estimation preserves the cubic term.
Since
\[
  \max_{S\ge0}(|\lambda|S^2/2-S^3/12)=8|\lambda|^3/3,
\]
we have
\(|\lambda|S^2/2\le S^3/12+C_\lambda\).  This constant is absorbed into the
linear term because \(q\ge1\), and \eqref{eq:H-target} follows in Case~1
with \(c_1=1/12\).

\smallskip
\noindent\emph{Case 2: \(K>n/2\).}
Write \(\theta=K/n\in(1/2,1]\).  The estimate
\[
  \log m!=m\log m-m+O\bigl(\log(m+1)\bigr),\qquad m\ge0,
\]
with \(0\log0=0\), is uniform also at \(m=0\).  Applied to \(m=n\) and
\(m=n-K\), it gives
\begin{equation}\label{eq:falling-large}
  \log\frac{(n)_K}{n^K}
  =n\int_0^\theta\log(1-u)\,\dd u+O(\log n),
\end{equation}
where the integral is interpreted continuously at \(\theta=1\).  Indeed,
\[
  \log\frac{(n)_K}{n^K}
  =\log n!-\log(n-K)!-K\log n,
\]
and Stirling's formula reduces the main term to
\[
  -n(1-\theta)\log(1-\theta)-n\theta
  =n\int_0^\theta\log(1-u)\,\dd u.
\]
Also, because \(p_n=(1+\eta)/n\), \(A=O(n^2)\), and
\(\log(1-p_n)=-p_n+O(p_n^2)\),
\begin{equation}\label{eq:large-absent}
\begin{aligned}
  K+A\log(1-p_n)
  &=K-Ap_n+O(Ap_n^2)
  \\
  &=\frac{K^2}{2n}-\eta K+\frac{\eta K^2}{2n}
    +O\!\left(\frac Kn+\frac{|d|}{n}+1\right)
  \\
  &=\frac{n\theta^2}{2}+O(n^{2/3}+q).
\end{aligned}
\end{equation}
Furthermore,
\begin{equation}\label{eq:large-present}
  (K+d)\log(1+\eta)=K\eta+O(K\eta^2+|d|\,|\eta|+|d|\eta^2)=O(n^{2/3}+q).
\end{equation}
Here \(Ap_n^2=O(1)\), and \eqref{eq:q-feasible} implies that all displayed
\(q\)-terms are at most of order \(n^{1/3}\).
Consequently,
\begin{equation}\label{eq:H-large}
  H_n(K,R,q)
  =n g(\theta)+O(n^{2/3}+\log n),
  \qquad
  g(\theta)=\int_0^\theta\log(1-u)\,\dd u+\frac{\theta^2}{2}.
\end{equation}
For \(0<\theta<1\),
\[
  g'(\theta)=\log(1-\theta)+\theta<0,
\]
and
\[
  g(1/2)=\frac12\log2-\frac38<0.
\]
Also \(g(1):=\lim_{\theta\uparrow1}g(\theta)=-1/2\).  Thus \(g\) is bounded
above by a negative constant on \([1/2,1]\).
For large \(n\), \eqref{eq:H-large} gives
\[
  H_n(K,R,q)\le-c_2n\le-c_2S^3,
\]
because \(S^3=K^3/n^2\le n\).  This proves \eqref{eq:H-target} also in
Case~2.  Every constant and the large \(n\) threshold in the two cases are
independent of \(q,K,R\) within \eqref{eq:tuple-range} and \(K\le n\).

On the compact range \eqref{eq:tuple-range}, the product of powers of the
\(x_i\)'s in \eqref{eq:summand-upper} is at most \(C_4^q\).  Combining
\eqref{eq:summand-upper} and \eqref{eq:H-target}, and using \(S\ge aq\),
we obtain, after adjusting constants,
\begin{equation}\label{eq:one-summand-final}
  L_n(\mathbf k,\mathbf r)\le C_5^q n^{-2q/3}e^{-c_3q^3}.
\end{equation}
There are at most \(C_6^q n^{2q/3}\) possible size tuples and at most
\((R_0+1)^q\) surplus tuples.  Summing \eqref{eq:one-summand-final} proves
\eqref{eq:uniform-bound} for \(n\ge n_0\), where \(n_0\) is independent of
\(q\).  It remains only to absorb the finitely many valid indices below
\(n_0\).  If that set is nonempty, put
\[
  n_*:=\max\{n:n_\lambda\le n<n_0\}.
\]
For such \(n\), \(\Xi_n(K_0)\le n_*\).  Keeping \(c\) fixed and replacing
\(C\) by \(\max\{C,n_*e^{cn_*^2}\}\), for \(1\le q\le n_*\) we have
\[
  C^qe^{-cq^3}
  \ge n_*^q e^{cq(n_*^2-q^2)}
  \ge n_*^q
  \ge \E[(\Xi_n(K_0))_q],
\]
while the factorial moment vanishes for \(q>n_*\).  This proves the stated
bound for every \(n\ge n_\lambda\).  If \(|\lambda|\le L\), all Taylor
remainders and error constants above are uniform after replacing
\(|\lambda|\) by \(L\).  Choose one \(n_L\) large enough that all preceding
estimates hold and that \(p_{n,\lambda}\in(0,1/2)\) and
\(|\lambda|n^{-1/3}\le1/2\) for every \(n\ge n_L\) and \(|\lambda|\le L\).
Together with the initial reduction from compact sets to rectangles, this
proves both assertions of the theorem.
\end{proof}

\begin{proposition}[Sharpness of the cubic order]
\label{prop:cubic-optimal}
Fix \(0<a<b<\infty\) and \(R_0\in\mathbb Z_{\ge0}\), and put
\(K_0=[a,b]\times\{0,\ldots,R_0\}\).  There are positive constants
\(A_-,A_+,B_-,B_+\), depending on \(K_0\) and \(\lambda\), such that
\begin{equation}\label{eq:cubic-optimal}
  A_-^q e^{-B_-q^3}
  \le \alpha_\lambda^{(q)}(K_0^q)
  \le A_+^q e^{-B_+q^3},
  \qquad q\ge1.
\end{equation}
Consequently, the power \(q^3\) in the uniform finite \(n\) estimate
\eqref{eq:main-uniform-bound} cannot be replaced by \(q^{3+\varepsilon}\),
for any \(\varepsilon>0\), while retaining a prefactor exponential in \(q\)
and constants independent of \(n,q\).
\end{proposition}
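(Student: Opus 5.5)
Both bounds come from the explicit density \eqref{eq:mq-def}: integrate it over \(K_0^q\), with surplus vectors summed and sizes integrated over \([a,b]^q\).

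\emph{Upper bound.} On \(K_0^q\) we have \(S=\sum_i x_i\in[aq,bq]\). Each factor satisfies
\(\rho_{r}(x)\le \max_{0\le r\le R_0}\frac{w_r}{\sqrt{2\pi}}\max_{x\in[a,b]}x^{3r/2-5/2}=:M\).
As in Case~1 of the proof of Theorem~\ref{thm:uniform-bound},
\(F(S,\lambda)\ge S^3/6-|\lambda|S^2/2\ge S^3/12-C_\lambda\).
Integrating over a region of volume at most \((b-a)^q(R_0+1)^q\) gives
\[
\alpha_\lambda^{(q)}(K_0^q)\le \bigl(M(b-a)(R_0+1)\bigr)^q e^{C_\lambda}e^{-a^3q^3/12}.
\]
The constant \(e^{C_\lambda}\) is absorbed into \(A_+^q\) since \(q\ge1\).

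\emph{Lower bound.} Keep only the surplus vector \(\mathbf r=0\), which is legitimate because the density is nonnegative. Use \(w_0=1\), so \(\rho_0(x)\ge (2\pi)^{-1/2}b^{-5/2}\) on \([a,b]\). Then \(S\le bq\) gives
\[
F(S,\lambda)\le \frac{S^3}{6}+\frac{|\lambda|S^2}{2}+\frac{\lambda^2S}{2}\le \frac{b^3q^3}{6}+C'_\lambda q^3,
\]
using \(q^2,q\le q^3\). Hence
\[
\alpha_\lambda^{(q)}(K_0^q)\ge \bigl((b-a)(2\pi)^{-1/2}b^{-5/2}\bigr)^q e^{-B_-q^3}.
\]

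\emph{Sharpness.} Suppose that for some \(\varepsilon>0\) and constants \(C,c\) independent of \(n,q\) one had \(\E[(\Xi_n(K_0))_q]\le C^qe^{-cq^{3+\varepsilon}}\) for all \(n\). Fix \(q\). The rectangle \(K_0^q\) is compact with boundary null for \(\alpha_\lambda^{(q)}\): in the size coordinates its boundary consists of hyperplanes, which have Lebesgue measure zero, and the surplus coordinates are discrete. So Theorem~\ref{thm:main}(i) (vague convergence, or directly the Riemann sum argument with Proposition~\ref{prop:local} and the indicator of \(K_0^q\)) gives \(\E[(\Xi_n(K_0))_q]=\alpha_{\Xi_n}^{(q)}(K_0^q)\to\alpha_\lambda^{(q)}(K_0^q)\). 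Passing to the limit, \(A_-^qe^{-B_-q^3}\le C^qe^{-cq^{3+\varepsilon}}\) for all \(q\), which is impossible as \(q\to\infty\).

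The only mildly delicate step is justifying the limit on the closed rectangle, i.e.\ the boundary null condition or the Riemann sum with an indicator. The uniform asymptotic in Proposition~\ref{prop:local} handles this directly, since the lattice points in \([a,b]^q\) give a Riemann sum of a continuous density over a box.
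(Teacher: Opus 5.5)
Your proposal is correct and follows the paper's proof. You bound the explicit density above using $S\ge aq$ and a cubic lower bound on $F$, and below by keeping only the zero surplus vector with $S\le bq$. You then transfer a hypothetical $q^{3+\varepsilon}$ bound to $\alpha_\lambda^{(q)}(K_0^q)$ via its continuity-set property. The only cosmetic differences are that the paper uses the exact identity $F(S,\lambda)=S^3/24+S(S-2\lambda)^2/8$ instead of $F\ge S^3/12-C_\lambda$, and integrates over $[a,(a+b)/2]$ rather than all of $[a,b]$ in the lower bound; neither change matters.
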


\begin{proof}
The identity
\begin{equation}\label{eq:F-positive-split}
  F(S,\lambda)=\frac{S^3}{24}+\frac{S(S-2\lambda)^2}{8}\ge\frac{S^3}{24}
\end{equation}
and \(S=\sum_i x_i\ge aq\) give the upper bound after bounding the finitely
many functions \(\rho_r\) on \([a,b]\) and integrating over the box.
For the lower bound, restrict the integral to surplus zero and to
\(x_i\in[a,(a+b)/2]\).  On that interval \(\rho_0\) has a positive minimum,
whereas \(S\le bq\) and
\[
  F(S,\lambda)
  \le \frac{S^3}{6}+\frac{|\lambda|S^2}{2}
       +\frac{\lambda^2S}{2}
  \le B_-q^3
\]
after increasing \(B_-\).  Integration gives the lower bound with an
exponential in \(q\) volume factor.  The rectangle \(K_0^q\) is an
\(\alpha_\lambda^{(q)}\)-continuity set because its size coordinate boundary
has Lebesgue measure zero.  If a uniform finite \(n\) upper bound with
\(q^{3+\varepsilon}\) held, taking \(n\to\infty\) at this continuity set
would transfer the same bound to \(\alpha_\lambda^{(q)}(K_0^q)\).
Comparison with the lower bound as \(q\to\infty\) gives a contradiction.
\end{proof}

\begin{corollary}[Finite \(n\) local consequences]
\label{cor:finite-n-consequences}
For every compact \(K\Subset E\), there are \(C_K,c_K>0\) such that, for all
\(n\ge n_\lambda\) and \(m\ge1\),
\begin{equation}\label{eq:overcrowding}
  \Prob\{\Xi_n(K)\ge m\}
  \le \frac{C_K^m}{m!}e^{-c_Km^3}.
\end{equation}
Let \(h:E\to\mathbb C\) be bounded and Borel measurable with
\(\operatorname{supp}h\subset K\), and define
\begin{equation}\label{eq:Gn-def}
  G_n(h)=\E\prod_{z\in\Xi_n}(1+h(z)),
\end{equation}
where atoms are counted with multiplicity.  Then
\begin{equation}\label{eq:Gn-series}
  G_n(h)
  =\sum_{q=0}^\infty\frac1{q!}
    \int_{E^q}\prod_{i=1}^q h(z_i)
    \,\dd\alpha_{\Xi_n}^{(q)}(\mathbf z).
\end{equation}
For every \(T<\infty\), there are \(A=A(K,T)<\infty\) and
\(d=d(K,T)>0\) such that, for every \(Q\in\mathbb Z_{\ge0}\),
\begin{equation}\label{eq:Gn-tail}
\begin{aligned}
  \sup_{\substack{n\ge n_\lambda,\ \operatorname{supp}h\subset K\\
                   \|h\|_\infty\le T}}
  \left|G_n(h)-\sum_{q=0}^Q\frac1{q!}
    \int_{E^q}\prod_{i=1}^q h(z_i)
    \,\dd\alpha_{\Xi_n}^{(q)}(\mathbf z)\right|
  \le A e^{-dQ^3}.
\end{aligned}
\end{equation}
For every Borel set \(B\subset K\), the choice \(h=-\one_B\) gives the
finite \(n\) void probability estimate
\begin{equation}\label{eq:void-tail}
 \sup_{n\ge n_\lambda}
 \left|\Prob\{\Xi_n(B)=0\}
 -\sum_{q=0}^Q\frac{(-1)^q}{q!}
   \E[(\Xi_n(B))_q]\right|
 \le Ae^{-dQ^3},
\end{equation}
where \(A,d\) may be taken from \eqref{eq:Gn-tail} with \(T=1\).
If \(f:E\to\mathbb C\) is bounded and Borel measurable with
\(\operatorname{supp}f\subset K\), then \eqref{eq:Gn-tail} also applies to the
complex local Laplace functional after taking \(h=e^{-f}-1\).  In particular,
for every \(t\ge0\),
\begin{equation}\label{eq:finite-exp-moment}
  \sup_{n\ge n_\lambda}\E e^{t\Xi_n(K)}<\infty.
\end{equation}
\end{corollary}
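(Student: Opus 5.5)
All assertions of Corollary~\ref{cor:finite-n-consequences} will be derived from Theorem~\ref{thm:uniform-bound} applied to the fixed compact window \(K\). Write \(N=\Xi_n(K)\) and \(M_q=\E[(N)_q]\le C_K^qe^{-c_Kq^3}\), uniformly in \(n\ge n_\lambda\).

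\emph{Overcrowding.} Use Markov's inequality on the binomial coefficient. On \(\{N\ge m\}\) we have \(\binom{N}{m}\ge1\), so
\[
  \Prob\{N\ge m\}\le\E\binom{N}{m}=\frac{M_m}{m!}\le\frac{C_K^m}{m!}e^{-c_Km^3}.
\]
This is \eqref{eq:overcrowding}.

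\emph{Series and truncation.} Fix \(n\). The variable \(N\) is finite (at most \(n\)), and \(h\) vanishes off \(K\). Hence
\[
  \prod_{z\in\Xi_n}(1+h(z))=\prod_{z\in\Xi_n|_K}(1+h(z))
  =\sum_{q\ge0}\frac1{q!}\sum^{\neq}_{z_1,\dots,z_q}\prod_i h(z_i).
\]
This is the finite expansion of the product over subsets, where ordered distinct tuples account for the \(q!\). Taking expectations, which is legitimate since the sum is finite almost surely, and using \eqref{eq:fmm-definition} via real and imaginary and positive and negative parts, gives \eqref{eq:Gn-series}. The \(q\)-th term is bounded by \(T^qM_q/q!\), because the integrand is bounded by \(T^q\) and supported in \(K^q\), whose \(\alpha^{(q)}_{\Xi_n}\)-mass is \(M_q\). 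The truncation error is therefore at most
\[
  \sum_{q>Q}\frac{(TC_K)^q}{q!}e^{-c_Kq^3}.
\]
Since \((TC_K)^q/q!\) is bounded by \(e^{TC_K}\) and \(e^{-c_Kq^3}\le e^{-c_KQ^3}e^{-c_K(q-Q)}\) for \(q>Q\), this tail is at most \(Ae^{-dQ^3}\) with \(d=c_K\). The bound is uniform in \(n\), in \(h\), and in \(Q\ge0\) (for \(Q=0\), enlarge \(A\)). This is \eqref{eq:Gn-tail}.

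\emph{Specializations.} Setting \(h=-\one_B\) makes \(\prod(1+h)=\one\{\Xi_n(B)=0\}\), and the \(q\)-th term becomes \((-1)^q\E[(\Xi_n(B))_q]/q!\). Here \(T=1\), which gives \eqref{eq:void-tail}. For the Laplace functional, take \(h=e^{-f}-1\). It has the same support, and \(\|h\|_\infty\le e^{\|f\|_\infty}+1\) bounds \(T\).

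\emph{Exponential moment.} Take \(f=-t\one_K\), so that \(h=(e^t-1)\one_K\). Then \(\E e^{tN}=G_n(h)\) is bounded by the absolutely convergent series \(\sum_q (e^t-1)^qC_K^qe^{-c_Kq^3}/q!\), uniformly in \(n\). This is \eqref{eq:finite-exp-moment}.

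No step is a serious obstacle. The only care points are justifying the interchange of expectation and sum when \(h\) is complex (handled by finiteness of \(N\) at each \(n\)) and making the tail constants uniform in \(Q\).
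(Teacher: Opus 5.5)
Your proposal is correct and follows the paper's proof essentially step for step. Both use Markov's inequality on \((\Xi_n(K))_m\) (your \(\binom{N}{m}\) is equivalent), the pathwise finite elementary symmetric expansion, the termwise bound \(T^q\E[(\Xi_n(K))_q]/q!\), and the specializations \(h=-\one_B\), \(h=e^{-f}-1\) and \(h=(e^t-1)\one_K\). The only small difference is in summing the tail: your bounds \((TC_K)^q/q!\le e^{TC_K}\) and \(q^3-Q^3\ge q-Q\) give \(d=c_K\) directly, whereas the paper absorbs \(q\log(C_KT)\) into \((c_K-d_1)q^3\) and obtains some \(d<c_K\).
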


\begin{proof}
Theorem~\ref{thm:uniform-bound} gives
\(\E[(\Xi_n(K))_q]\le C_K^q e^{-c_Kq^3}\) for all \(q\ge1\) and
\(n\ge n_\lambda\).  On \(\{\Xi_n(K)\ge m\}\), one has
\((\Xi_n(K))_m\ge m!\), so Markov's inequality gives
\eqref{eq:overcrowding}.

Since \(\Xi_n\) has finitely many atoms, the product in \eqref{eq:Gn-def} has
the elementary symmetric expansion.  Taking expectations and using the
definition of the factorial moment measures gives \eqref{eq:Gn-series}.  If
\(\|h\|_\infty\le T\), then the absolute value of the \(q\)th summand in
\eqref{eq:Gn-series} is bounded by
\begin{equation}\label{eq:Gn-term-bound}
  \frac{T^q}{q!}\E[(\Xi_n(K))_q]\le \frac{(C_KT)^q}{q!}e^{-c_Kq^3}.
\end{equation}
This is the same factorial moment estimate that gives \eqref{eq:overcrowding}.
It also gives a summable majorant.  If \(T=0\), the tail is zero.  If \(T>0\),
choose \(0<d_1<c_K\).  Since
\(q\max\{\log(C_KT),0\}\le(c_K-d_1)q^3\) for all sufficiently large \(q\), the
remaining finitely many terms can be absorbed into a constant and
\[
 \sum_{q>Q}\frac{(C_KT)^q}{q!}e^{-c_Kq^3}
 \le C\sum_{q>Q}e^{-d_1q^3}
 \le A e^{-dQ^3}
\]
for any fixed \(0<d<d_1\).  This proves \eqref{eq:Gn-tail}.

Taking \(h=-\one_B\) gives \eqref{eq:void-tail}.  If
\(\operatorname{supp}f\subset K\), then \(e^{-f}-1\) vanishes outside \(K\), so
taking \(h=e^{-f}-1\) gives the asserted complex Laplace functional bound.
Finally, with \(h=(e^t-1)\one_K\), \eqref{eq:Gn-series} and
\eqref{eq:Gn-term-bound} give \eqref{eq:finite-exp-moment}.
\end{proof}

\subsection{Laplace functionals and identification}\label{sec:laplace}

This subsection turns the factorial measure convergence and the all order
bound proved above into convergence of the marked point process.  We prove
tightness, pass to the Laplace functionals, and identify the limit through
the Janson--Spencer Palm description from~\cite{JansonSpencer2007}.

\begin{lemma}[Marked local tightness]\label{lem:marked-tightness}
The sequence \((\Xi_n)_{n\ge n_\lambda}\) is tight in \(\Mp(E)\).
\end{lemma}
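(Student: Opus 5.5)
Tightness in \(\Mp(E)\) under the vague topology reduces, by the standard criterion (Kallenberg~\cite[Chapter~4]{Kallenberg2017}), to tightness of the real random variables \(\Xi_n(K)\) for every compact \(K\Subset E\). Equivalently, it suffices to check \(\sup_n\Prob\{\Xi_n(K)>m\}\to0\) as \(m\to\infty\) for each \(K\) in a sequence of compacts exhausting \(E\). I would use the exhausting rectangles
\[
  K_j=[1/j,j]\times\{0,\ldots,j\},\qquad j\ge1 .
\]
Every compact subset of \(E\) lies in some \(K_j\), because a compact set has its size coordinate in a compact subinterval of \((0,\infty)\) and meets only finitely many surplus values.

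For each fixed \(j\), I would apply Theorem~\ref{thm:uniform-bound} with \(q=1\), which gives \(\sup_{n\ge n_\lambda}\E\,\Xi_n(K_j)\le C_{K_j}e^{-c_{K_j}}<\infty\). Markov's inequality then yields \(\sup_n\Prob\{\Xi_n(K_j)>m\}\le C_{K_j}/m\), which tends to zero. Alternatively, the overcrowding bound \eqref{eq:overcrowding} of Corollary~\ref{cor:finite-n-consequences} gives the stronger estimate directly.

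To finish, I would convert this into relative compactness in \(\Mp(E)\). Fix \(\varepsilon>0\) and choose \(m_j\) with \(\sup_n\Prob\{\Xi_n(K_j)>m_j\}\le\varepsilon2^{-j}\). The set
\[
  \Gamma=\{\mu\in\Mp(E):\mu(K_j)\le m_j\ \text{for all } j\}
\]
is relatively compact in the vague topology, since it is uniformly bounded on every compact set. It is in fact closed: each \(K_j\) can be enlarged slightly to an open neighbourhood on which the bound is preserved, and the closedness of \(\Mp(E)\) has already been shown in the setup. A union bound gives \(\Prob\{\Xi_n\notin\Gamma\}\le\varepsilon\) uniformly in \(n\), which is tightness.

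There is essentially no obstacle here. The only step needing care is quoting the correct vague relative compactness criterion, namely that a family is relatively compact if and only if it is bounded on compacts, and the fact that \(\Mp(E)\) is closed in \(\mathcal M(E)\). Both are standard and were recalled above. The uniform first moment bound is the only input taken from the enumeration.
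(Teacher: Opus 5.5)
Your argument is correct and follows the paper's route. The \(q=1\) case of Theorem~\ref{thm:uniform-bound} gives \(\sup_n\E\,\Xi_n(K)<\infty\) for each compact \(K\), and Markov's inequality makes each \(\Xi_n(K)\) tight. Kallenberg's vague tightness criterion, together with closedness of \(\Mp(E)\) in \(\mathcal M(E)\), then finishes; your explicit \(\Gamma\) simply unpacks that criterion.

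One small slip: \(\Gamma\) need not be closed. The map \(\mu\mapsto\mu(K_j)\) is only upper semicontinuous under vague convergence, so atoms approaching \(\partial K_j\) from outside can raise \(\mu(K_j)\) in the limit. This is harmless, because you can use the closure of \(\Gamma\) instead. That closure is vaguely compact, lies in \(\Mp(E)\), and still has probability at least \(1-\varepsilon\) under each \(\Xi_n\).
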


\begin{proof}
For every compact \(K\Subset E\), the case \(q=1\) of
Theorem~\ref{thm:uniform-bound} gives
\begin{equation}\label{eq:first-moment-compact}
  \sup_{n\ge n_\lambda}\E\Xi_n(K)<\infty.
\end{equation}
Indeed, for every \(M>0\), Markov's inequality gives
\[
  \sup_{n\ge n_\lambda}\Prob\{\Xi_n(K)>M\}
  \le M^{-1}\sup_{n\ge n_\lambda}\E\Xi_n(K),
\]
and the right hand side tends to zero as \(M\to\infty\).  Since every
relatively compact Borel set is contained in a compact set, the same Markov
bound shows that \((\Xi_n(B))_{n\ge n_\lambda}\) is tight for each such set
\(B\).  The tightness criterion for random measures
\cite[Theorem~4.10]{Kallenberg2017} then yields tightness of
\((\Xi_n)_{n\ge n_\lambda}\) in \(\mathcal M(E)\).
Since \(\Mp(E)\) is closed in \(\mathcal M(E)\), as noted in
Section~\ref{sec:setup}, the sequence is tight in \(\Mp(E)\).
\end{proof}

\begin{proof}[Proof of Theorem~\ref{thm:main}(ii)]
Fix \(f\in C_c^+(E)\), let
\begin{equation}\label{eq:g-def}
  g=1-e^{-f},
  \qquad
  K=\operatorname{supp}f.
\end{equation}
Then \(0\le g\le1\).  For every locally finite point measure \(\xi\),
only finitely many atoms lie in \(K\), and the elementary symmetric function
identity gives the pathwise finite expansion
\begin{equation}\label{eq:laplace-expansion}
  e^{-\langle f,\xi\rangle}=\prod_{z\in\xi}(1-g(z))=\sum_{q=0}^\infty\frac{(-1)^q}{q!}\sum_{z_1,\ldots,z_q\in\xi}^{\neq}\prod_{i=1}^q g(z_i).
\end{equation}
The sum in \eqref{eq:laplace-expansion} is pathwise finite.  In addition,
Corollary~\ref{cor:finite-n-consequences} gives the uniform absolute bound
\[
  \frac{\|g\|_\infty^q}{q!}\E[(\Xi_n(K))_q]
  \le\frac{C_K^q e^{-c_Kq^3}}{q!},
\]
since \(\|g\|_\infty\le1\).  The resulting majorant is summable uniformly in
\(n\), and the expected expansion may be controlled term by term.  If
\(a_{n,q}\) denotes the expected \(q\)th term, then for each fixed \(q\),
Theorem~\ref{thm:main}(i) gives \(a_{n,q}\to a_q\), where
\[
  a_q=\frac{(-1)^q}{q!}\int_{E^q}
  \prod_{i=1}^q(1-e^{-f(z_i)})\,
  \dd\alpha_\lambda^{(q)}(\mathbf z).
\]
The same majorant holds for \(|a_q|\) after passing to the limit.  For
\(Q\ge0\), put
\[
  R(Q)=\sum_{q>Q}\frac{C_K^q e^{-c_Kq^3}}{q!}.
\]
Then \(R(Q)\to0\) and
\[
  \left|\E e^{-\langle f,\Xi_n\rangle}-\sum_{q=0}^Q a_{n,q}\right|\le R(Q),
\]
uniformly in \(n\); the limiting series has the same tail bound.  Since
\(\sum_{q=0}^Q a_{n,q}\to\sum_{q=0}^Q a_q\) for each fixed \(Q\), the
triangle inequality shows that \(\E e^{-\langle f,\Xi_n\rangle}\) converges
to the series on the right hand side of \eqref{eq:enum-Laplace-theorem}.

By Lemma~\ref{lem:marked-tightness}, every subsequence has a further
subsequence converging in law to a locally finite point process.  Along that
further subsequence, the bounded continuous functional
\(\xi\mapsto e^{-\langle f,\xi\rangle}\) converges in expectation.  Formula
\eqref{eq:enum-Laplace-theorem} shows that all subsequential limits have the
same Laplace functional.  The convergence criterion for random measures
\cite[Theorem~4.11]{Kallenberg2017}, applied to constant sequences, shows
that this Laplace functional determines the common law; denote a point
process with this law by \(\Xi_\lambda^{\mathrm{enum}}\).  Since every
subsequence has a further subsequence converging to this law, tightness
implies the full convergence \eqref{eq:enum-convergence}.
\end{proof}

\begin{proof}[Proof of Corollary~\ref{cor:aldous-identification}]
Write
\(\mathcal X_\lambda^{\mathrm A}=\sum_j\delta_{\zeta_j}\) for the unmarked
length projection of \(\Xi_\lambda^{\mathrm A}\).
Theorems~3.1 and~4.1 of Janson--Spencer~\cite{JansonSpencer2007}
give, respectively, the conditional distribution
\(\Prob\{\operatorname{surplus}=r\mid x\}=w_rx^{3r/2}/\Psi(x^{3/2})\) and
the intensity density \(\Lambda_\mu\) of the unmarked excursion length
process at parameter \(\mu\), defined by
\begin{equation}\label{eq:JS-intensity}
  \Lambda_\mu(x)
  =\frac{1}{\sqrt{2\pi}}x^{-5/2}
   \Psi(x^{3/2})e^{-F(x,\mu)}.
\end{equation}
Set \(S_i=x_1+\cdots+x_i\) and \(S_0=0\).  Iterating their Palm formula
\cite[Theorem~8.2]{JansonSpencer2007} gives, for every nonnegative measurable
\(H\),
\begin{equation}\label{eq:JS-Palm-iteration}
  \E\sum_{x_1,\ldots,x_q\in\mathcal X_\lambda^{\mathrm A}}^{\neq}H(x_1,\ldots,x_q)=\int_{(0,\infty)^q}H(\mathbf x)\prod_{i=1}^q\Lambda_{\lambda-S_{i-1}}(x_i)\,\dd\mathbf x.
\end{equation}
Indeed, after the distinguished Palm atom at \(x_1\) is deleted, the
remaining process has parameter \(\lambda-x_1\); induction supplies the
successive shifts.  The intensities are absolutely continuous, so collisions
have zero mass; see also Janson--Spencer
\cite[Corollary~8.7 and Remark~8.8]{JansonSpencer2007} for the corresponding
factorial moment formulas.  Thus the unmarked \(q\)th factorial density is
\begin{equation}\label{eq:JS-length-factorial}
  \prod_{i=1}^q\Lambda_{\lambda-S_{i-1}}(x_i).
\end{equation}
Moreover, that remark gives finite exponential moments for counts in compact
size intervals.  In the Janson--Spencer marked description, conditionally on
the lengths, the marks have the above distribution.  Hence the marked
\(q\)th factorial density at \(((x_i,r_i))_{i=1}^q\) is
\[
  \prod_{i=1}^q\Lambda_{\lambda-S_{i-1}}(x_i)
  \frac{w_{r_i}x_i^{3r_i/2}}{\Psi(x_i^{3/2})}.
\]
The factors \(\Psi(x_i^{3/2})\) cancel.
Using \eqref{eq:F-def} and the change of variables
\(u=S_{i-1}+v\),
\begin{equation}\label{eq:F-telescope}
\begin{aligned}
  F(x_i,\lambda-S_{i-1})
  &=\frac12\int_{S_{i-1}}^{S_i}(u-\lambda)^2\,\dd u,
  \\
  \sum_{i=1}^qF(x_i,\lambda-S_{i-1})
  &=F(S_q,\lambda).
\end{aligned}
\end{equation}
Thus the marked factorial density of \(\Xi_\lambda^{\mathrm A}\) is exactly
\(m_\lambda^{(q)}\) in \eqref{eq:mq-def}; this computation restates the
Janson--Spencer structure in the present notation.

If \(K\Subset E\), the marked count \(\Xi_\lambda^{\mathrm A}(K)\) is
bounded by the unmarked length count in the compact projection of \(K\).
Janson--Spencer's exponential moment bound
\cite[Remark~8.8]{JansonSpencer2007} gives
\(\E 2^{\Xi_\lambda^{\mathrm A}(K)}<\infty\).  Since the absolute value of
the pathwise expansion in \eqref{eq:laplace-expansion} is bounded by
\(2^{\Xi_\lambda^{\mathrm A}(K)}\), expectation may be taken term by term for
\(\xi=\Xi_\lambda^{\mathrm A}\).  Its Laplace functional is therefore the
series on the right hand side of \eqref{eq:enum-Laplace-theorem}.  Hence
Laplace functional uniqueness gives
\(\Xi_\lambda^{\mathrm{enum}}\stackrel d=\Xi_\lambda^{\mathrm A}\), and
\eqref{eq:main-convergence} follows from \eqref{eq:enum-convergence}.
\end{proof}

\section{From local limits to ordered
\texorpdfstring{\(\ell^2\)}{l2} convergence}\label{sec:ordered}

The marked local limit controls compact size windows and finitely many
surplus marks.  This section removes these restrictions.  Summing over all
surplus values gives the unmarked local limit and the small component
estimate, while a macroscopic mass bound controls the upper size tail.
Together these estimates yield ordered \(\ell^2\) convergence and convergence
of the largest marked components.

For \(k\ge1\) and \(r\ge0\), let
\[
  N_n(k,r)
  =\#\{C\in\mathcal C(G(n,p_n)):|C|=k,\ \spn(C)=r\},
  \qquad t_n(k)=\E N_n(k,0).
\]

\begin{lemma}[All surplus component bound]\label{lem:all-surplus}
There is an absolute constant \(A<\infty\) such that, for every \(k\ge1\)
and \(r\ge1\),
\begin{equation}\label{eq:all-surplus-graph-bound}
  c(k,k+r-1)
  \le \left(\frac{A}{r}\right)^{r/2}k^{k+3r/2-2}.
\end{equation}
For every \(0<b<\infty\), there are constants \(C_b,D_b<\infty\) and an
integer \(n_b\), depending also on \(\lambda\), such that, whenever
\(n\ge n_b\) and \(1\le k\le bn^{2/3}\),
\begin{align}
  t_n(k)&\le C_b n k^{-5/2},                                      \label{eq:tree-upper}\\
  \E N_n(k,r)&\le t_n(k)\left(\frac{D_b}{r}\right)^{r/2},
       \qquad r\ge1.                                               \label{eq:surplus-ratio-bound}
\end{align}
\end{lemma}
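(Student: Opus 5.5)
The graph bound \eqref{eq:all-surplus-graph-bound} will not be proved from scratch. The plan is to cite the all surplus estimate of Janson--Spencer~\cite[(3.7)]{JansonSpencer2007}; this is the same input named in the introduction for the ordered \(\ell^2\) extension. That estimate states \(c(k,k+r-1)\le (A/r)^{r/2}k^{k+3r/2-2}\) uniformly in \(k\ge1\) and \(r\ge1\), and rests on Bollob\'as' uniform excess bound. If only a form like \(c(k,k+\ell)\le (A'/\ell)^{\ell/2}k^{k+(3\ell-1)/2}\) is available with excess \(\ell=r-1\), I would convert it. Setting \(\ell=r-1\) gives the exponent \(k+3r/2-2\) exactly. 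The change from \((A'/(r-1))^{(r-1)/2}\) to \((A/r)^{r/2}\) costs at most a factor \(C^r\), because \(r^{r/2}/(r-1)^{(r-1)/2}\le C\sqrt r\le C^r\). The case \(r=1\) is handled separately by the unicyclic bound \(c(k,k)\le Ck^{k-1/2}\), which follows from Lemma~\ref{lem:wright-bound} with \(R_0=1\).

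For \eqref{eq:tree-upper}, I would take \(q=1\) and \(r=0\) in Proposition~\ref{prop:exact}, together with Cayley's formula and the lower Stirling bound \(k!\ge k^ke^{-k}\sqrt{2\pi k}\). This gives
\[
t_n(k)\le \frac{n}{\sqrt{2\pi}}\,k^{-5/2}\exp\{H_n(k,0,1)\},
\]
with \(H_n\) as in \eqref{eq:H-def}; the factor \(n\) comes from \(n^K p_n^{K-1}=n(1+\eta)^{K-1}\). Since \(k\le bn^{2/3}\le n/2\) for large \(n\), Case~1 of the proof of Theorem~\ref{thm:uniform-bound} applies verbatim. That proof never used the lower bound \(k\ge an^{2/3}\) in Case~1: the remainder \eqref{eq:remainder-small-explicit} is \(O_{b,\lambda}(1)\) when \(K\le bn^{2/3}\) and \(q=1\). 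Hence \(H_n\le -S^3/6+|\lambda|S^2/2+C\le C_{b,\lambda}\) for \(S=kn^{-2/3}\le b\), and \eqref{eq:tree-upper} follows.

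For \eqref{eq:surplus-ratio-bound}, I would form the ratio of the exact formulas:
\[
\frac{\E N_n(k,r)}{t_n(k)}=\frac{c(k,k+r-1)}{k^{k-2}}\Bigl(\frac{p_n}{1-p_n}\Bigr)^{r}.
\]
The exponent of \((1-p_n)\) drops by exactly \(r\), since \(A\) decreases by \(R\) in \eqref{eq:A-def}. This identity requires \(t_n(k)>0\), which holds because \(k\le n\). By \eqref{eq:all-surplus-graph-bound} the ratio is at most \((A/r)^{r/2}\bigl(k^{3/2}p_n/(1-p_n)\bigr)^r\). For \(n\ge n_b\) we have \(k^{3/2}p_n\le b^{3/2}(1+|\lambda|n^{-1/3})\le 2b^{3/2}\), and \(1-p_n\ge1/2\). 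So the ratio is at most \((A/r)^{r/2}(4b^{3/2})^r=(16Ab^3/r)^{r/2}\), which gives \(D_b=16Ab^3\). When the edge count is infeasible the left side is zero and there is nothing to prove.

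The main obstacle is the first step: the paper's argument depends on having a uniform all surplus bound with the \(r^{-r/2}\) decay. Should the citation need to be reproved, I would go through Bollob\'as' argument, which reduces to kernels by pruning trees and suppressing degree-two vertices and then counts cubic multigraphs; that is substantially harder. The remaining steps are routine reuse of Case~1 estimates.
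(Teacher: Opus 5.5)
Your proposal is correct and follows the paper's proof. The graph bound is cited from Janson--Spencer~\cite[(3.7)]{JansonSpencer2007}, and \eqref{eq:surplus-ratio-bound} comes from the same ratio of the \(q=1\) exact formulas, using \(p_n/(1-p_n)=O(1/n)\) and \(k^{3/2}\le b^{3/2}n\).

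The only difference is in \eqref{eq:tree-upper}. The paper quotes the uniform asymptotic Janson--Spencer~\cite[(4.1)]{JansonSpencer2007} and uses \(F\ge0\). You instead derive the bound from Proposition~\ref{prop:exact}, Cayley's formula, a lower Stirling bound and the Case~1 estimates in the proof of Theorem~\ref{thm:uniform-bound}. Those estimates indeed never use \(k\ge an^{2/3}\) and give \(H_n\le -F(S,\lambda)+O_{b,\lambda}(1)\), so your variant is self-contained and equally valid.
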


\begin{proof}
The graph enumeration estimate \eqref{eq:all-surplus-graph-bound} is
Janson--Spencer~\cite[(3.7)]{JansonSpencer2007}; it is uniform in the
surplus.

Janson--Spencer~\cite[(4.1)]{JansonSpencer2007}, applied with surplus zero,
and Cayley's formula give, uniformly for \(1\le k\le bn^{2/3}\),
\[
  t_n(k)=n\frac{k^{k-2}e^{-k}}{k!}e^{-F(kn^{-2/3},\lambda)}
  \bigl(1+O_{b,\lambda}(n^{-1/3})\bigr).
\]
Since \(F\ge0\) and Stirling's formula gives
\(k^{k-2}e^{-k}/k!\le Ck^{-5/2}\) for every \(k\ge1\), this proves
\eqref{eq:tree-upper} for all sufficiently large \(n\).

Applying Proposition~\ref{prop:exact} with \(q=1\) to surplus \(r\) and
dividing by the case \(r=0\), the factors \((n)_k/k!\) cancel.  The absent
edge exponent in \eqref{eq:A-def} decreases by \(r\), while the present edge
exponent increases by \(r\).  Thus
\begin{equation}\label{eq:surplus-tree-ratio}
  \frac{\E N_n(k,r)}{t_n(k)}
  =\frac{c(k,k+r-1)}{k^{k-2}}
   \left(\frac{p_n}{1-p_n}\right)^r.
\end{equation}
For large \(n\), \(p_n/(1-p_n)\le2/n\).  Hence
\eqref{eq:all-surplus-graph-bound} and \(k\le bn^{2/3}\) imply
\[
  \frac{\E N_n(k,r)}{t_n(k)}
  \le\left(\frac{4Ab^3}{r}\right)^{r/2},
\]
which is the estimate behind Janson--Spencer~\cite[(3.8)]{JansonSpencer2007}
written in the present component expectation notation.  This proves
\eqref{eq:surplus-ratio-bound} after renaming the constant.
\end{proof}

Let \(\mathcal X_n\) be the unmarked component size process
\begin{equation}\label{eq:unmarked-process}
  \mathcal X_n=\sum_{C\in\mathcal C(G(n,p_n))}
  \delta_{n^{-2/3}|C|}.
\end{equation}
Recall \(\Psi(t)=\sum_{r\ge0}w_rt^r\).  The unmarked limiting density is
obtained from the marked density \(m_\lambda^{(q)}\) by summing over all
surplus coordinates.  Thus, for \(x_i>0\), define
\begin{equation}\label{eq:unmarked-density}
  \overline m_\lambda^{(q)}(x_1,\ldots,x_q)
  =e^{-F(\sum_i x_i,\lambda)}
   \prod_{i=1}^q\frac{x_i^{-5/2}}{\sqrt{2\pi}}
   \Psi(x_i^{3/2}).
\end{equation}

\begin{proposition}[Unmarked local limit]
\label{prop:unmarked-local}
For every fixed \(q\ge1\), the factorial moment measures of
\(\mathcal X_n\) converge vaguely on \((0,\infty)^q\) to the measure with
density \(\overline m_\lambda^{(q)}\).  Moreover, for every
\(0<a<b<\infty\), there are \(C,c>0\) such that
\begin{equation}\label{eq:unmarked-high-order}
  \E[(\mathcal X_n([a,b]))_q]\le C^q e^{-cq^3},
  \qquad q\ge1,\quad n\ge n_\lambda.
\end{equation}
Consequently,
\begin{equation}\label{eq:unmarked-local-convergence}
  \mathcal X_n\Rightarrow\mathcal X_\lambda^{\mathrm A}
  :=\sum_j\delta_{\zeta_j}
\end{equation}
in the vague topology on locally finite point measures on \((0,\infty)\).
\end{proposition}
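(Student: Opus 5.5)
The plan is to reduce everything to the marked results by summing over the surplus coordinate, using Lemma~\ref{lem:all-surplus} to control the surplus tail uniformly in the order \(q\).

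\textbf{Step 1: high order bound.} This is the main point. Write
\[
\E[(\mathcal X_n([a,b]))_q]=\sum_{\mathbf r\in\mathbb Z_{\ge0}^q}\sum_{\mathbf k}L_n(\mathbf k,\mathbf r),
\]
with \(an^{2/3}\le k_i\le bn^{2/3}\), and use the exact formula of Proposition~\ref{prop:exact}. The surplus enters only through three factors:
\begin{itemize}
\item \(c(k_i,k_i+r_i-1)\);
\item \(p_n^{R}\);
\item \((1-p_n)^{-R}\), coming from the \(-R\) in \(A\).
\end{itemize}
I will therefore factor
\[
L_n(\mathbf k,\mathbf r)=L_n(\mathbf k,\mathbf 0)\prod_{i=1}^q\frac{c(k_i,k_i+r_i-1)}{k_i^{k_i-2}}\Bigl(\frac{p_n}{1-p_n}\Bigr)^{r_i}.
\]
This is exactly the \(q\)-fold analogue of \eqref{eq:surplus-tree-ratio}.

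By \eqref{eq:all-surplus-graph-bound} together with \(k_i\le bn^{2/3}\), each ratio is at most \((D_b/r_i)^{r_i/2}\) for \(r_i\ge1\), and equals \(1\) for \(r_i=0\). Summing over \(\mathbf r\) then gives
\[
\E[(\mathcal X_n([a,b]))_q]\le M_b^q\,\E[(\Xi_n([a,b]\times\{0\}))_q],
\]
where \(M_b=1+\sum_{r\ge1}(D_b/r)^{r/2}<\infty\). This holds for \(n\ge n_b\) with \(n_b\) independent of \(q\). Theorem~\ref{thm:uniform-bound} with \(K=[a,b]\times\{0\}\) now yields \eqref{eq:unmarked-high-order} for \(n\ge n_b\). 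The finitely many remaining \(n\) are absorbed exactly as at the end of the proof of Theorem~\ref{thm:uniform-bound}, using \(\mathcal X_n([a,b])\le n\). The only subtle point is that the factorization must be exact, so that no \(q\)-dependent error enters. It is exact because \(A\) is linear in \(R\).

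\textbf{Step 2: vague convergence of factorial measures.} Fix \(H\in C_c((0,\infty)^q)\) supported in \([a,b]^q\). Split the surplus sum into \(\mathbf r\in\{0,\dots,R_0\}^q\) and the rest.
\begin{itemize}
\item The truncated part converges, by Theorem~\ref{thm:main}(i), to \(\int H\) against the truncated sum of \(m_\lambda^{(q)}\) over surpluses.
\item By the Step~1 factorization and the uniform boundedness of \(\sum_{\mathbf k}L_n(\mathbf k,\mathbf 0)\), the remainder is at most \(\|H\|_\infty C_q\sum_{\max r_i>R_0}\prod_i(D_b/r_i)^{r_i/2}\). This tends to \(0\) as \(R_0\to\infty\), uniformly in \(n\).
\item On the limit side, \(w_r\) satisfies the matching bound, obtained by letting \(k\to\infty\) in \eqref{eq:all-surplus-graph-bound}. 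Hence \(\sum_r\rho_r(x)=x^{-5/2}\Psi(x^{3/2})/\sqrt{2\pi}\) converges uniformly on \([a,b]\), and the limit is \(\overline m_\lambda^{(q)}\).
\end{itemize}

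\textbf{Step 3: point process convergence.} I will repeat the proof of Theorem~\ref{thm:main}(ii) verbatim for \(\mathcal X_n\) on \((0,\infty)\).
\begin{itemize}
\item Tightness follows from the case \(q=1\) of \eqref{eq:unmarked-high-order}.
\item The Laplace expansion has a uniform summable majorant from \eqref{eq:unmarked-high-order}, and its terms converge by Step~2.
\end{itemize}
This gives convergence to the unique process whose factorial densities are \(\overline m_\lambda^{(q)}\).

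For the identification, the densities \(\overline m_\lambda^{(q)}\) coincide with the Janson--Spencer unmarked densities \eqref{eq:JS-length-factorial}, by the telescoping \eqref{eq:F-telescope}. Moreover, \(\mathcal X_\lambda^{\mathrm A}\) has finite exponential moments on compacts by \cite[Remark~8.8]{JansonSpencer2007}. So its Laplace functional equals the same series, and uniqueness gives \eqref{eq:unmarked-local-convergence}. Alternatively, one can project Corollary~\ref{cor:aldous-identification}, but projection is not vaguely continuous without the surplus tail control, so the direct route is cleaner.
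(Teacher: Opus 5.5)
Your proposal is correct and follows essentially the same route as the paper. Both arguments rest on three points:
\begin{enumerate}
\item the exact per-component surplus ratio \eqref{eq:tuple-surplus-ratio}, which reduces \eqref{eq:unmarked-high-order} to Theorem~\ref{thm:uniform-bound} on \([a,b]\times\{0\}\);
\item surplus truncation with uniform tail control on both the finite-\(n\) and limiting sides, via \(w_r\le(A/r)^{r/2}\);
\item the tightness and Laplace-functional argument, with identification through \eqref{eq:JS-length-factorial}, \eqref{eq:F-telescope} and the Janson--Spencer exponential moments.
\end{enumerate}
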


\begin{proof}
For fixed component sizes \(\mathbf k=(k_1,\ldots,k_q)\) with
\(\sum_i k_i\le n\) (the remaining tuples contribute zero), division of
\eqref{eq:exact} by its all tree version gives
\begin{equation}\label{eq:tuple-surplus-ratio}
  \frac{L_n(\mathbf k,\mathbf r)}{L_n(\mathbf k,\mathbf0)}
  =\prod_{i=1}^q
   \frac{c(k_i,k_i+r_i-1)}{k_i^{k_i-2}}
   \left(\frac{p_n}{1-p_n}\right)^{r_i}.
\end{equation}
If \(k_i\le bn^{2/3}\), the proof of Lemma~\ref{lem:all-surplus} bounds the
sum of each factor over \(r_i\ge0\) by some \(B_b<\infty\).
Therefore, for all sufficiently large \(n\),
\[
  \E[(\mathcal X_n([a,b]))_q]
  \le B_b^q
      \E[(\Xi_n([a,b]\times\{0\}))_q].
\]
Theorem~\ref{thm:uniform-bound} proves \eqref{eq:unmarked-high-order}.
If there are remaining admissible values, let \(n_*\) be their maximum.
With \(c\) fixed, replacing \(C\) by
\(\max\{C,n_*e^{cn_*^2}\}\) gives the bound for \(q\le n_*\), while the
falling factorial vanishes for \(q>n_*\).

For the vague convergence, fix a continuous test function \(H\) supported in
\([a,b]^q\).  If \(R\) is fixed and each surplus coordinate is restricted to
\(\{0,\ldots,R\}\), Proposition~\ref{prop:local} and the Riemann sum argument
from the proof of Theorem~\ref{thm:main}(i) give the limit of the truncated
factorial integral.  The absolute value of the discarded terms is at most
\(\|H\|_\infty\) times the following expression, which is uniformly small as
\(R\to\infty\): by \eqref{eq:tuple-surplus-ratio},
\[
  \sum_{\mathbf k}\sum_{\max_i r_i>R}L_n(\mathbf k,\mathbf r)\le qB_b^{q-1}\left(\sum_{r>R}(D_b/r)^{r/2}\right)\sum_{\mathbf k}L_n(\mathbf k,\mathbf0),
\]
where the sums over \(\mathbf k\) have \(an^{2/3}\le k_i\le bn^{2/3}\).
The last factor is
\(\E[(\Xi_n([a,b]\times\{0\}))_q]\), hence is bounded uniformly in \(n\) by
Theorem~\ref{thm:uniform-bound}; the tail in \(r\) tends to zero.  Letting
\(k\to\infty\) in \eqref{eq:all-surplus-graph-bound} and using
\eqref{eq:wright} gives \(w_r\le(A/r)^{r/2}\).  Hence, uniformly for
\(x\in[a,b]\),
\[
  \frac{\rho_r(x)}{\rho_0(x)}=w_rx^{3r/2}
  \le\left(\frac{Ab^3}{r}\right)^{r/2},
\]
so the surplus tail of the limiting density also tends to zero uniformly on
\([a,b]^q\).  Letting first \(n\to\infty\) and then \(R\to\infty\) is
therefore justified.  Summing \eqref{eq:local-asymptotic} over all surplus
coordinates gives
\eqref{eq:unmarked-density}, since
\[
  \sum_{r\ge0}\rho_r(x)=\frac{x^{-5/2}}{\sqrt{2\pi}}\Psi(x^{3/2}).
\]
This proves the asserted vague convergence of factorial moment measures.

The tightness and Laplace functional argument of
Subsection~\ref{sec:laplace} applies with \(\mathcal X_n\) and
\eqref{eq:unmarked-high-order} replacing \(\Xi_n\) and
Theorem~\ref{thm:uniform-bound}.  Finally,
\eqref{eq:JS-length-factorial} and \eqref{eq:F-telescope} show that
\(\mathcal X_\lambda^{\mathrm A}\) has precisely the factorial densities
\eqref{eq:unmarked-density}; its local counts have finite exponential
moments by Janson--Spencer~\cite[Remark~8.8]{JansonSpencer2007}.  Thus its
Laplace functional is the resulting absolutely convergent series, proving
\eqref{eq:unmarked-local-convergence}.
\end{proof}

\begin{proposition}[Small components and surplus tails]
\label{prop:tail-estimates}
Recall the coordinates \(X_{n,i}\) of \(\mathbf X_n\) from
\eqref{eq:Xn-def}.
There is \(C=C(\lambda)<\infty\) such that, for \(0<\varepsilon\le1\),
\begin{equation}\label{eq:small-square-mass}
  \sup_{n\ge n_\lambda}
  \E\sum_{i:X_{n,i}\le\varepsilon}X_{n,i}^2
  \le C\sqrt\varepsilon.
\end{equation}
For every \(0<a<b<\infty\),
\begin{equation}\label{eq:surplus-tightness}
  \lim_{R\to\infty}\sup_{n\ge n_\lambda}
  \E\Xi_n([a,b]\times\{R+1,R+2,\ldots\})=0.
\end{equation}
\end{proposition}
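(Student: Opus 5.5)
For \eqref{eq:small-square-mass}, I will write the expectation as a sum over component sizes. Put \(N_n(k)=\sum_{r\ge0}N_n(k,r)\). Then
\[
  \E\sum_{i:X_{n,i}\le\varepsilon}X_{n,i}^2
  =n^{-4/3}\sum_{1\le k\le\varepsilon n^{2/3}}k^2\,\E N_n(k).
\]

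The first step is to bound \(\E N_n(k)\) by a multiple of the tree count. Take \(b=1\) in Lemma~\ref{lem:all-surplus}; since \(\varepsilon\le1\), every relevant \(k\) satisfies \(k\le n^{2/3}\). Summing \eqref{eq:surplus-ratio-bound} over \(r\ge0\) gives
\[
  \E N_n(k)\le t_n(k)\Bigl(1+\sum_{r\ge1}(D_1/r)^{r/2}\Bigr)\le B\,t_n(k),
\]
because the series \(\sum_{r\ge1}(D_1/r)^{r/2}\) converges. Combining this with \eqref{eq:tree-upper} yields \(\E N_n(k)\le BC_1nk^{-5/2}\) for all \(n\ge n_1\).

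The second step is the sum itself. We have
\[
  n^{-4/3}\sum_{k\le\varepsilon n^{2/3}}k^2\cdot nk^{-5/2}
  =n^{-1/3}\sum_{k\le\varepsilon n^{2/3}}k^{-1/2}
  \le n^{-1/3}\cdot 2\sqrt{\varepsilon n^{2/3}}
  =2\sqrt\varepsilon .
\]
This bound holds for \(n\ge n_1\), uniformly in \(\varepsilon\in(0,1]\).

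The third step handles the finitely many \(n_\lambda\le n<n_1\). It is the only step that needs care: the required bound is \(C\sqrt\varepsilon\), not merely \(O(1)\), so I cannot simply absorb these \(n\) into a constant. For a fixed such \(n\), the sum is empty when \(\varepsilon<n^{-2/3}\), since then no integer \(k\ge1\) satisfies \(k\le\varepsilon n^{2/3}\). When \(\varepsilon\ge n^{-2/3}\ge n_1^{-2/3}\), the left side is at most \(n^{-4/3}\cdot n^2\le n_1^{2/3}\). That quantity is at most \(n_1\sqrt\varepsilon\), because \(\sqrt\varepsilon\ge n_1^{-1/3}\). Taking \(C=\max\{2BC_1,n_1\}\) therefore covers every \(n\ge n_\lambda\).

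For \eqref{eq:surplus-tightness}, I will apply Lemma~\ref{lem:all-surplus} with the given \(b\) and restrict the size sum to \(an^{2/3}\le k\le bn^{2/3}\). For \(n\ge n_b\), this gives
\[
  \E\Xi_n([a,b]\times\{R+1,\dots\})
  \le\Bigl(\sum_{r>R}(D_b/r)^{r/2}\Bigr)\sum_{an^{2/3}\le k\le bn^{2/3}}t_n(k).
\]
By \eqref{eq:tree-upper}, the size sum is at most \(C_bn\sum_{k\ge an^{2/3}}k^{-5/2}\le C'a^{-3/2}\), uniformly in \(n\). Alternatively, it equals \(\E\Xi_n([a,b]\times\{0\})\), which is bounded by Theorem~\ref{thm:uniform-bound} with \(q=1\).

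The tail factor \(\sum_{r>R}(D_b/r)^{r/2}\) tends to \(0\) as \(R\to\infty\), which handles all \(n\ge n_b\). For each of the finitely many \(n_\lambda\le n<n_b\), the graph has at most \(\binom n2\) edges, so every surplus is at most \(n^2\). Hence the expectation vanishes once \(R\ge n_b^2\). Both parts of the statement follow.
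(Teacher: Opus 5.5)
Your proposal is correct and follows the paper's proof essentially step for step: you sum the surplus ratio bound of Lemma~\ref{lem:all-surplus} with \(b=1\) against the tree bound \(t_n(k)\le C_1nk^{-5/2}\), compare \(\sum_{k\le\varepsilon n^{2/3}}k^{-1/2}\) with \((\varepsilon n^{2/3})^{1/2}\), and, for the surplus tail, bound the size sum uniformly in \(n\) while \(\sum_{r>R}(D_b/r)^{r/2}\to0\). Your handling of the finitely many small \(n\), via the explicit constant \(\max\{2BC_1,n_1\}\) and the cutoff \(R\ge n_b^2\), just makes concrete the paper's ``enlarge the constant'' and ``largest feasible surplus'' steps.
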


\begin{proof}
The series \(1+\sum_{r\ge1}(D_1/r)^{r/2}\) is finite, so
Lemma~\ref{lem:all-surplus}, with \(b=1\), gives
\(\sum_{r\ge0}\E N_n(k,r)\le Cnk^{-5/2}\) for
\(k\le n^{2/3}\) and all sufficiently large \(n\).  Consequently,
\begin{align*}
  \E\sum_{i:X_{n,i}\le\varepsilon}X_{n,i}^2
  &=n^{-4/3}\sum_{k\le\varepsilon n^{2/3}}
    k^2\sum_{r\ge0}\E N_n(k,r)\\
  &\le Cn^{-1/3}\sum_{k\le\varepsilon n^{2/3}}k^{-1/2}
  \le Cn^{-1/3}(\varepsilon n^{2/3})^{1/2}=C\sqrt\varepsilon,
\end{align*}
where the last step uses the integral comparison
\(\sum_{k\le K}k^{-1/2}\le C K^{1/2}\).
For each of the finitely many remaining admissible \(n\), the left side is
zero when \(\varepsilon<n^{-2/3}\); when
\(\varepsilon\ge n^{-2/3}\), its ratio to \(\sqrt\varepsilon\) is bounded
by a finite constant.  Enlarging \(C\) proves
\eqref{eq:small-square-mass} for every admissible \(n\).

For the second assertion, Lemma~\ref{lem:all-surplus} gives, for all large
\(n\),
\[
  \E\Xi_n([a,b]\times\{R+1,R+2,\ldots\})\le\left(\sum_{an^{2/3}\le k\le bn^{2/3}}C_bn k^{-5/2}\right)\sum_{r>R}(D_b/r)^{r/2}.
\]
The first factor is bounded uniformly in \(n\), and the second tends to
zero.  For the finitely many omitted \(n\), the left side vanishes once
\(R\) exceeds the largest feasible surplus.  This proves
\eqref{eq:surplus-tightness}.
\end{proof}

\begin{proposition}[Macroscopic mass]
\label{prop:macro-mass}
There is \(M=M(\lambda)<\infty\) such that
\begin{equation}\label{eq:macro-mass}
  \sup_{n\ge n_\lambda}n^{-2/3}
  \E\sum_{C:\,|C|\ge n^{2/3}}|C|\le M.
\end{equation}
Consequently, for every \(B\ge1\),
\begin{equation}\label{eq:upper-size-tail}
  \sup_{n\ge n_\lambda}\Prob(X_{n,1}>B)\le M/B.
\end{equation}
\end{proposition}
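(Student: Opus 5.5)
The second assertion \eqref{eq:upper-size-tail} follows from \eqref{eq:macro-mass} by Markov's inequality. On \(\{X_{n,1}>B\}\) with \(B\ge1\), the largest component has \(|C_{n,1}|>Bn^{2/3}\ge n^{2/3}\). Hence
\[
  n^{-2/3}\sum_{C:\,|C|\ge n^{2/3}}|C|\ge X_{n,1}>B
\]
on this event, and taking expectations gives \(\Prob(X_{n,1}>B)\le M/B\). The real content is therefore \eqref{eq:macro-mass}, which is the all excess total mass estimate of Janson--Spencer~\cite[(5.3)]{JansonSpencer2007}. The plan is to reduce to it, and only then to check how much can be done directly from the present enumeration.

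\emph{Step 1: components of size at most \(bn^{2/3}\).} Fix some large \(b\) and split the sum at \(|C|=bn^{2/3}\). For \(n^{2/3}\le k\le bn^{2/3}\), Lemma~\ref{lem:all-surplus} gives \(\sum_r\E N_n(k,r)\le C_bnk^{-5/2}\). Therefore
\[
  n^{-2/3}\sum_{n^{2/3}\le k\le bn^{2/3}}k\cdot C_bnk^{-5/2}
  \le C_bn^{1/3}\sum_{k\ge n^{2/3}}k^{-3/2}
  =O(1),
\]
uniformly in large \(n\).

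\emph{Step 2: components of size above \(bn^{2/3}\).} This is the main obstacle. The bound \eqref{eq:surplus-ratio-bound} and the tree bound \(t_n(k)\) were only proved for \(k\le bn^{2/3}\). Beyond that range the surplus is no longer tight, and a component of size \(k\) typically has excess of order \(k^3/n^2\). Summing Wright-type bounds over all excesses requires the uniform connected graph estimates that hold when the excess is allowed to grow with \(k\). This is exactly what Janson--Spencer's \((5.3)\) packages: \(\E\sum_{|C|>bn^{2/3}}|C|=O(n^{2/3})\), obtained from \(c(k,k+r-1)\le(A/r)^{r/2}k^{k+3r/2-2}\), i.e.\ \eqref{eq:all-surplus-graph-bound}, valid for all \(r\).

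If one wants the argument self-contained, I would use Proposition~\ref{prop:exact} with \(q=1\), multiply by \(k\), and sum over \(r\ge0\) with \eqref{eq:all-surplus-graph-bound}. Writing \(t=rn^{2/3}/k^{3/2}\cdot(\text{const})\), the sum \(\sum_r(Ak^3/(rn^2))^{r/2}\) is at most \(\exp\{Ck^3/n^2\}\) with \(C\) small. The key point is that this \(C\) must be strictly less than the \(1/6\) coming from the cubic factor \(e^{-k^3/(6n^2)}\) of the single component asymptotic (cf.\ Case~1 in the proof of Theorem~\ref{thm:uniform-bound}). Establishing this is precisely the delicate part. Were that comparison to fail, I would simply cite \cite[(5.3)]{JansonSpencer2007} rather than rederive it. For \(k>n/2\), Case~2 of that proof shows exponentially small contributions of order \(e^{-cn}\), and these are harmless after multiplying by \(k\le n\).

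\emph{Step 3: small \(n\).} Finally, the finitely many \(n_\lambda\le n<n_0\) are absorbed into \(M\), since each left side is trivially bounded by \(n^{1/3}\).
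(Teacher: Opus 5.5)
Your deduction of \eqref{eq:upper-size-tail} from \eqref{eq:macro-mass} is the same as the paper's, and Step~1 is correct. However, Step~2, which carries all the content, is not established.

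The self-contained route stalls on a constant. Put \(x=(1+o(1))Ak^3/n^2\). The sum \(\sum_r(x/r)^{r/2}\) is dominated by terms with \(r\approx x/e\) and equals \(e^{(1+o(1))x/(2e)}\). So it loses to \(e^{-k^3/(6n^2)}\) unless \(A<e/3\), whereas \eqref{eq:all-surplus-graph-bound} only provides some unspecified absolute constant \(A\). The same obstruction reappears for \(k>n/2\), because Case~2 in the proof of Theorem~\ref{thm:uniform-bound} was only carried out for bounded surplus.

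Your fallback citation does not close the gap either. The Janson--Spencer estimate the paper uses is their Lemma~5.2 and (5.3), which rests on the all excess enumeration of \cite{JKLP1993} rather than on \eqref{eq:all-surplus-graph-bound}. It states \(\E V_n^{\mathrm c}=O(n^{2/3})\), where \(V_n^{\mathrm c}\) is the number of vertices in components of surplus at least two. It says nothing about macroscopic trees and unicyclic components. As written, your proposal therefore never bounds \(\sum_{k>bn^{2/3}}k\,\E N_n(k,\ell)\) for \(\ell\in\{0,1\}\).

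The missing observation is that bounded surplus needs no all excess summation: compare it with surplus two. Proposition~\ref{prop:exact} with \(q=1\) gives, for \(n^{2/3}\le k\le n\) and all large \(n\),
\[
  \frac{\E N_n(k,\ell)}{\E N_n(k,2)}
  =\frac{c(k,k+\ell-1)}{c(k,k+1)}\left(\frac{1-p_n}{p_n}\right)^{2-\ell}
  \le D_\ell\, k^{-3(2-\ell)/2}\,O(n^{2-\ell})=O(1),
  \qquad \ell\in\{0,1\}.
\]
The inequality uses Wright's fixed surplus asymptotic \eqref{eq:wright} together with \(w_2>0\). The final bound uses \(k^{3/2}\ge n\).

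Multiplying by \(k\) and summing, the macroscopic mass in trees and unicyclic components is at most a constant times the surplus two mass, hence \(O(\E V_n^{\mathrm c})=O(n^{2/3})\). Adding the components of surplus at least two, which are bounded by the same citation, gives \eqref{eq:macro-mass} for all \(k\ge n^{2/3}\) at once. This makes your cutoff at \(bn^{2/3}\) and Step~1 unnecessary.

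Alternatively, you could bound \(r\in\{0,1\}\) directly by the \(q=1\) case of the Case~1 and Case~2 estimates. Either way, the components of surplus at least two must come from the cited all excess bound, not from \eqref{eq:all-surplus-graph-bound}.
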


\begin{proof}
Let \(V_n^{\mathrm c}\) be the number of vertices in components of surplus
at least two.  Janson--Spencer~\cite[Lemma~5.2 and
(5.3)]{JansonSpencer2007} prove that
\begin{equation}\label{eq:complex-mass-bound}
  \E V_n^{\mathrm c}=O(n^{2/3});
\end{equation}
their argument uses the all excess enumeration developed in
Janson--Knuth--\L uczak--Pittel~\cite[Lemma~5]{JKLP1993}.

It remains to compare macroscopic tree and unicyclic components with
surplus two components.  The exact single component formula yields, for
\(\ell\in\{0,1\}\), all sufficiently large \(n\), and
\(n^{2/3}\le k\le n\),
\begin{equation}\label{eq:macro-comparison}
  \frac{\E N_n(k,\ell)}{\E N_n(k,2)}
  =\frac{c(k,k+\ell-1)}{c(k,k+1)}
   \left(\frac{1-p_n}{p_n}\right)^{2-\ell}.
\end{equation}
Since \(w_2>0\), \eqref{eq:wright} implies that, for each
\(\ell\in\{0,1\}\), there are \(K,D_\ell<\infty\) such that
\[
  \frac{c(k,k+\ell-1)}{c(k,k+1)}
  \le D_\ell k^{-3(2-\ell)/2},\qquad k\ge K.
\]
Since \((1-p_n)/p_n=O(n)\), for \(k\ge n^{2/3}\) the right hand side of
\eqref{eq:macro-comparison} is bounded by
\[
  D_\ell k^{-3(2-\ell)/2}O(n^{2-\ell})=O(n^{2-\ell}n^{-(2-\ell)})=O(1),
\]
uniformly in \(n\).  After multiplying by \(k\) and summing in \(k\), the
expected macroscopic mass in tree and unicyclic components is therefore at
most a constant times the expected mass in surplus two components, which is
bounded by \(\E V_n^{\mathrm c}\).  Together with
\eqref{eq:complex-mass-bound}, this
proves \eqref{eq:macro-mass} for all sufficiently large \(n\); enlarge the
constant for the finitely many remaining admissible indices.

If \(X_{n,1}>B\), then
\(n^{-2/3}\sum_{|C|\ge n^{2/3}}|C|>B\).  Markov's inequality and
\eqref{eq:macro-mass} give \eqref{eq:upper-size-tail}.
\end{proof}

\begin{proof}[Proof of Corollary~\ref{cor:l2-convergence}(i)]
For \(0<\varepsilon<1\le B<\infty\), let
\(T_{\varepsilon,B}\mathbf x\)
retain the coordinates of \(\mathbf x\) in \([\varepsilon,B]\), arrange
them in nonincreasing order, and pad with zeros.  The limiting point process
has no atom at either endpoint, because its intensity
\eqref{eq:JS-intensity} is absolutely continuous.  At such a point measure,
restriction to \([\varepsilon,B]\), followed by decreasing rearrangement and
zero padding, is continuous as a map into \(\ell^2\).  Hence the continuous
mapping theorem applied to Proposition~\ref{prop:unmarked-local} gives
\begin{equation}\label{eq:truncated-l2-convergence}
  T_{\varepsilon,B}\mathbf X_n
  \Rightarrow T_{\varepsilon,B}\boldsymbol\zeta_\lambda
  \qquad\text{in }\ell^2.
\end{equation}

Campbell's formula and \eqref{eq:JS-intensity} give
\begin{equation}\label{eq:limit-square-integrability}
  \E\sum_i\zeta_i^2=\int_0^\infty x^2\Lambda_\lambda(x)\,\dd x<\infty.
\end{equation}
Indeed, near zero the integrand is \(O(x^{-1/2})\), since
\(\Psi(x^{3/2})\to1\); at infinity, the asymptotic
\(\Psi(t)\sim\tfrac12t^2e^{t^2/24}\)
\cite[(3.4)]{JansonSpencer2007}, together with \eqref{eq:F-def}, gives an
integrable cubic exponential bound.  Thus
\(\boldsymbol\zeta_\lambda\in\ell^2_\downarrow\) almost surely and
\begin{equation}\label{eq:limit-truncation}
  \|\boldsymbol\zeta_\lambda-
  T_{\varepsilon,B}\boldsymbol\zeta_\lambda\|_2
  \longrightarrow0
\end{equation}
almost surely as \(\varepsilon\downarrow0\) and \(B\uparrow\infty\).

For the finite graphs, Propositions~\ref{prop:tail-estimates} and
\ref{prop:macro-mass} give, for every \(\eta>0\),
\begin{align}\label{eq:finite-l2-truncation}
 &\sup_{n\ge n_\lambda}
 \Prob\bigl(\|\mathbf X_n-T_{\varepsilon,B}\mathbf X_n\|_2>\eta\bigr)
 \nonumber\\
 &\qquad\le \frac{M}{B}+\frac{C\sqrt\varepsilon}{\eta^2}.
\end{align}
Indeed, on \(\{X_{n,1}\le B\}\), the squared truncation error is at most
\(\sum_{i:X_{n,i}<\varepsilon}X_{n,i}^2\).  Hence
\[
  \Prob\bigl(\|\mathbf X_n-T_{\varepsilon,B}\mathbf X_n\|_2>\eta\bigr)\le\Prob(X_{n,1}>B)+\Prob\!\left(\sum_{i:X_{n,i}<\varepsilon}X_{n,i}^2>\eta^2\right).
\]
The first term is bounded by \eqref{eq:upper-size-tail}, and Markov's
inequality with \eqref{eq:small-square-mass} bounds the second term.

First let \(n\to\infty\) in \eqref{eq:truncated-l2-convergence}.  Then take
\(\varepsilon_m=m^{-1}\) and \(B_m=m\) for \(m\ge2\), and let
\(m\to\infty\) in
\eqref{eq:finite-l2-truncation} and \eqref{eq:limit-truncation}.  The
converging together theorem~\cite[Chapter~1, Theorem~3.2]{Billingsley1999} proves
\eqref{eq:l2-convergence}.
\end{proof}

\begin{proof}[Proof of Corollary~\ref{cor:l2-convergence}(ii)]
The second factorial moment measure has the absolutely continuous density
\eqref{eq:JS-length-factorial}, so \(\mathcal X_\lambda^{\mathrm A}\) has no
two atoms at the same size almost surely.  Moreover, the mean and variance
asymptotics in Janson--Spencer~\cite[Theorem~1.2]{JansonSpencer2007} imply
that the count \(\mathcal X_\lambda^{\mathrm A}([\varepsilon,\infty))\) has
mean tending to infinity and variance divided by the square of its mean
tending to zero as \(\varepsilon\downarrow0\).  Chebyshev's inequality and
monotonicity in \(\varepsilon\) therefore show that there are almost surely
infinitely many positive atoms.  On the other hand,
\eqref{eq:limit-square-integrability} implies that its largest atom is
finite.  Thus \(\zeta_m>0\) and \(\zeta_1<\infty\) almost surely, so
\(\Prob(\zeta_m\le a)\to0\) as \(a\downarrow0\) and
\(\Prob(\zeta_1\ge b)\to0\) as \(b\uparrow\infty\).  For any
\(\delta>0\), choose continuity points \(0<a<b<\infty\) such that these two
probabilities have sum less than \(\delta\).  Then the largest \(m\)
limiting atoms all lie in \((a,b)\) with probability at least
\(1-\delta\).  By \eqref{eq:l2-convergence}, the largest \(m\) finite \(n\)
atoms then lie in the same interval with probability at least
\(1-\delta-o(1)\).

For fixed \(R\), restrict a point measure to
\([a,b]\times\{0,\ldots,R\}\), order its atoms by decreasing size, retain
the first \(m\), and pad with a fixed cemetery symbol if fewer than \(m\)
atoms remain, using a fixed measurable rule at size ties.  This map is
continuous at every locally finite point measure having no boundary atom and
no two atoms in the window with the same size.
Indeed, its finitely many atoms can then be separated by disjoint
neighborhoods whose boundaries have zero mass; vague convergence preserves
the number and locations of the atoms in these neighborhoods, and the
discrete marks are eventually constant.  The limiting process satisfies
these continuity conditions almost surely.  Thus
\eqref{eq:main-convergence} and the continuous mapping theorem give
convergence of the truncated ordered vectors for every fixed \(a,b,R\).
Furthermore, \eqref{eq:l2-convergence} and the absence of limiting size ties
give
\[
  \Prob\!\left\{\min_{1\le i\le m}(X_{n,i}-X_{n,i+1})=0\right\}
  \longrightarrow0.
\]

The probability that truncation changes one of the first \(m\) finite graph
pairs is at most the probability that one of their sizes lies outside
\((a,b)\), plus the preceding tie probability and
\[
  \Prob\{\Xi_n([a,b]\times\{R+1,R+2,\ldots\})>0\}.
\]
The size probability is at most \(\delta+o(1)\), the tie probability tends
to zero, and the displayed mark probability tends to zero uniformly in
\(n\) as \(R\to\infty\) by \eqref{eq:surplus-tightness} and Markov's
inequality.  For the limit, its
first factorial density gives
\[
  \E\Xi_\lambda^{\mathrm A}([a,b]\times\{R+1,R+2,\ldots\})
  =\sum_{r>R}\int_a^b e^{-F(x,\lambda)}\rho_r(x)\,\dd x\longrightarrow0,
\]
so the corresponding limiting error also tends to zero.  First letting
\(n\to\infty\), then \(R\to\infty\), and finally \(\delta\downarrow0\)
proves \eqref{eq:ranked-marked-convergence}.  The vanishing tie probability
also removes any dependence on the finite \(n\) tie breaking rule.
\end{proof}

\section*{Acknowledgments}
 This work is supported by the National Key R\&D Program of China (No. 2022YFA1006500) and by the National Natural Science Foundation of China (No. 12401171).

\section*{Data Availability Statement}
Data sharing is not applicable to this article, as no datasets were
generated or analysed during the current study.

\section*{Conflict of Interest}
The author declares no conflict of interest.



\begin{thebibliography}{99}

\bibitem{ABG2010}
L.~Addario-Berry, N.~Broutin and C.~Goldschmidt,
Critical random graphs: limiting constructions and distributional
properties,
\emph{Electron. J. Probab.} \textbf{15} (2010), no.~25, 741--775.

\bibitem{ABG2012}
L.~Addario-Berry, N.~Broutin and C.~Goldschmidt,
The continuum limit of critical random graphs,
\emph{Probab. Theory Related Fields} \textbf{152} (2012), 367--406.

\bibitem{Aldous1997}
D.~Aldous,
Brownian excursions, critical random graphs and the multiplicative
coalescent,
\emph{Ann. Probab.} \textbf{25} (1997), 812--854.

\bibitem{Billingsley1999}
P.~Billingsley,
\emph{Convergence of Probability Measures}, 2nd ed.,
Wiley, New York, 1999.

\bibitem{Bollobas1984}
B.~Bollob\'as,
The evolution of random graphs,
\emph{Trans. Amer. Math. Soc.} \textbf{286} (1984), 257--274.

\bibitem{Bollobas2001}
B.~Bollob\'as,
\emph{Random Graphs}, 2nd ed.,
Cambridge University Press, Cambridge, 2001.

\bibitem{BJR2007}
B.~Bollob\'as, S.~Janson and O.~Riordan,
The phase transition in inhomogeneous random graphs,
\emph{Random Structures Algorithms} \textbf{31} (2007), 3--122.

\bibitem{DVJ2008}
D.~J. Daley and D.~Vere-Jones,
\emph{An Introduction to the Theory of Point Processes. Vol.~II:
General Theory and Structure}, 2nd ed.,
Springer, New York, 2008.

\bibitem{DeAmbroggio2022}
U.~De Ambroggio,
An elementary approach to component sizes in critical random graphs,
\emph{J. Appl. Probab.} \textbf{59} (2022), no.~4, 1228--1242.

\bibitem{DeAmbroggio2024}
U.~De Ambroggio,
A simple path to component sizes in critical random graphs,
\emph{SIAM J. Discrete Math.} \textbf{38} (2024), no.~2, 1492--1525.

\bibitem{DeAmbroggioRoberts2022}
U.~De Ambroggio and M.~I. Roberts,
Unusually large components in near-critical Erd\H{o}s--R\'enyi graphs via
ballot theorems,
\emph{Combin. Probab. Comput.} \textbf{31} (2022), no.~5, 840--869.

\bibitem{ER1960}
P.~Erd\H{o}s and A.~R\'enyi,
On the evolution of random graphs,
\emph{Publ. Math. Inst. Hungar. Acad. Sci.} \textbf{5} (1960), 17--61.

\bibitem{Janson2007}
S.~Janson,
Brownian excursion area, Wright's constants in graph enumeration, and
other Brownian areas,
\emph{Probab. Surv.} \textbf{4} (2007), 80--145.

\bibitem{JKLP1993}
S.~Janson, D.~E. Knuth, T.~\L uczak and B.~Pittel,
The birth of the giant component,
\emph{Random Structures Algorithms} \textbf{4} (1993), 233--358.

\bibitem{JLR2000}
S.~Janson, T.~\L uczak and A.~Ruci\'nski,
\emph{Random Graphs},
Wiley-Interscience, New York, 2000.

\bibitem{JansonSpencer2007}
S.~Janson and J.~Spencer,
A point process describing the component sizes in the critical window of
the random graph evolution,
\emph{Combin. Probab. Comput.} \textbf{16} (2007), 631--658.

\bibitem{Kallenberg2017}
O.~Kallenberg,
\emph{Random Measures, Theory and Applications},
Springer, Cham, 2017.

\bibitem{Luczak1990}
T.~\L uczak,
Component behavior near the critical point of the random graph process,
\emph{Random Structures Algorithms} \textbf{1} (1990), 287--310.

\bibitem{LPW1994}
T.~\L uczak, B.~Pittel and J.~C. Wierman,
The structure of a random graph at the point of the phase transition,
\emph{Trans. Amer. Math. Soc.} \textbf{341} (1994), 721--748.

\bibitem{NachmiasPeres2010}
A.~Nachmias and Y.~Peres,
The critical random graph, with martingales,
\emph{Israel J. Math.} \textbf{176} (2010), 29--41.

\bibitem{Roberts2018}
M.~I. Roberts,
The probability of unusually large components in the near-critical
Erd\H{o}s--R\'enyi graph,
\emph{Adv. Appl. Probab.} \textbf{50} (2018), no.~1, 245--271.

\bibitem{Spencer1997}
J.~Spencer,
Enumerating graphs and Brownian motion,
\emph{Comm. Pure Appl. Math.} \textbf{50} (1997), 291--294.

\bibitem{vdH2017}
R.~van der Hofstad,
\emph{Random Graphs and Complex Networks. Vol.~1},
Cambridge University Press, Cambridge, 2017.

\bibitem{HKM2009}
R.~van der Hofstad, W.~Kager and T.~M\"uller,
A local limit theorem for the critical random graph,
\emph{Electron. Commun. Probab.} \textbf{14} (2009), 122--131.

\bibitem{Wright1977}
E.~M. Wright,
The number of connected sparsely edged graphs,
\emph{J. Graph Theory} \textbf{1} (1977), 317--330.

\bibitem{Wright1980}
E.~M. Wright,
The number of connected sparsely edged graphs. III. Asymptotic results,
\emph{J. Graph Theory} \textbf{4} (1980), 393--407.

\end{thebibliography}
\end{document}